\newcommand{\X}{\mathbb{X}}
\newcommand{\mrestr}{\mathbin{\vrule height 1.6ex depth 0pt width
0.13ex\vrule height 0.13ex depth 0pt width 1.3ex}}
\newcommand{\mean}[1]{\,-\hskip-1.08em\int_{#1}}
\def\R{\mathbb{R}}
\def\N{\mathbb{N}}
\def\one{\mathds1}
\def\d{\mathrm{d}}
\def\dd{\bm{\delta}}
\def\lip{\mathrm{Lip}}
\def\der{\mathbf{Der}}
\def\loc{\mathrm{loc}}
\def\M{\mathbf{M}}
\renewcommand{\div}{\text{\sl div}}
\newtheorem{definition}{Definition}[section]
\newtheorem{thm}{Theorem}[section]
\newtheorem{proposition}[thm]{Proposition}
\newtheorem{r1}[thm]{Remark}
\newenvironment{remark}{\begin{r1}
\begin{rm}}{
\end{rm}\end{r1}}
\newtheorem{lemma}[thm]{Lemma}
\newcommand{\subjclass}[2][1991]{
  \let\@oldtitle\@title
  \gdef\@title{\@oldtitle\footnotetext{#1 \emph{Mathematics subject classification.} #2}}
}
\newcommand{\keywords}[1]{
  \let\@@oldtitle\@title
  \gdef\@title{\@@oldtitle\footnotetext{\emph{Key words and phrases.} #1.}}
}
\title{Rough traces of $BV$ functions in metric measure spaces}
\author{Vito Buffa\footnote{Bologna, Italy. E--mail: \texttt{bff.vti@gmail.com}, \textsf{ORCID iD}: 0000-0003-4175-4848.},
Michele Miranda jr.\footnote{University of Ferrara, 
Ferrara, Italy. E--mail: \texttt{michele.miranda@unife.it}}}
\date{\today}
\subjclass[2020]{Primary 26A45. Secondary 26B20, 30L99, 46E35, 46E36.}
\keywords{Functions of bounded variation, metric measure spaces, traces, integration by parts formulas}
\begin{document}

\maketitle

\begin{abstract}\noindent
Following a Maz'ya-type approach,
we adapt the theory of rough traces of functions of bounded variation ($BV$) to the context of doubling metric measure spaces supporting a Poincar\'e inequality. This eventually allows for an integration by parts formula involving the rough trace of such functions. We then compare our analysis
with the study done in a recent work by P. Lahti and N. Shanmugalingam, where traces of $BV$ functions are studied by means of the more classical Lebesgue-point characterization,
and we determine the conditions under which the two notions coincide.
\end{abstract}

\bigskip
\sloppy

\section{Introduction}

\medskip

This paper aims at investigating traces of $BV$ functions and integration 
by parts formul\ae\ in metric measure spaces. The setting is given by a complete and separable metric measure  space $(\X,d,\mu)$ endowed with a doubling measure $\mu$ and supporting a weak $(1,1)$--Poincar\'e inequality. We prove an integration by parts formula on open sets of finite perimeter with some regularity; the basic idea of the proof is to use the notion of essential boundary and to define the rough trace of a $BV$ function on such boundary using its super-level sets.

Sets with finite perimeter in metric measure spaces were defined for instance in \cite{Mir} and studied by L. Ambrosio in \cite{A,A1}. The main fact we use is that, for a set with finite perimeter, the perimeter measure is concentrated on the essential boundary of the set itself, \cite{A1}. The notion of essential boundary is good enough to perform the strategy given by V. Maz'ya in his book~\cite{Ma}. In the Euclidean case, the reduced boundary was used instead, and an integration by parts formula was proved. Also, the continuity of the trace operator was investigated and equivalent conditions for such continuity were given. In the metric space setting - except for the case of $\mathsf{RCD}(K,N)$ spaces \cite{abs,bps} - we have so far no good notion of reduced boundary, but for our aims the essential boundary suffices. 

Properties of the trace operator have been recently investigated in \cite{LS} and sufficient conditions for the continuity of such operator were given in terms of a ``measure-density condition'' on the boundary of the selected domain. We compare this notion of trace with the rough trace proving almost-everywhere equality of the two functions on the boundary of the open set under investigation. In this way, two different characterizations of the trace values of a function with bounded variation are available, the two being equivalent. 

\bigskip

The paper is organized as follows.

\smallskip

In Section \ref{preliminaries} we review the basic tools of our analysis, namely the concept of a metric measure space $(\X,d,\mu)$ equipped with a doubling measure and supporting a weak Poincar\'e inequality, the notions of $BV$ function and of Caccioppoli set, along with the fundamental results related to them, such as the Coarea Formula, the Isoperimetric Inequality and of course the remarkable Theorem by L. Ambrosio on the Hausdorff representation of the perimeter measure, \cite[Theorem 5.3]{A1}.

\smallskip

In Section \ref{roughtrace} we rewrite, after \cite{Ma}, the notion and the properties of the rough trace of $BV$ functions defined on an open domain $\Omega\subset\X$. In particular, we re-investigate the conditions under which a $BV$ function admits a summable rough trace and we consider the issue of the extendability of $u\in BV(\Omega)$ to the whole of $\X$. The latter part of the Section is then devoted to an integration by parts formula for functions of bounded variation in terms of a suitable class of ``vector fields'' (actually, \textit{bounded Lipschitz derivations}), a formula which, as shown in Theorem \ref{gauss-green}, features implicitly the rough trace of $u\in BV(\Omega)$. 

The topic of integration by parts formul\ae, especially in connection with $BV$ functions and sets of finite perimeter, has been an object of interest for quite a few decades now. After the pioneering work of G. Anzellotti \cite{anz} in 1983, who introduced the class of \textit{divergence-measure vector fields} - namely, those vector fields whose distributional divergence is a finite Radon measure - to prove an integration by parts formula for $BV$ functions on domains with Lipschitz boundary, such research area has been flourishing again since the early 2000's, when several authors started devoting considerable attention to the subject, leading to notable applications to sets of finite perimeter in the Euclidean setting, namely, the validity of (generalized) Gauss-Green formul\ae\ in terms of the normal traces of divergence-measure fields (see \cite{acm,ctz,sil,sil2}, and also the latest developments given in \cite{cct,cp,ct,cdc1,cdc2,cdcm,leosar1,leosar2}). \\
More recently, the issue has been attacked also in less regular settings, like metric measure spaces, \cite{bps,bu,bcm,mms}, and stratified groups, \cite{cm}. In particular, in \cite{mms} the authors operated in the context of a doubling metric measure space equipped with Cheeger's differential structure \cite{ch} and satisfying a Poincar\'e inequality; in their analysis, they found the so-called \textit{regular balls} to be the appropriate class of domains where a certain integration by parts formula holds. The results of \cite{mms} were then reprised by \cite{bu} and later refined in \cite{bcm}; both these works rely on the differential structure developed in \cite{gi}, which allows to extend the previous analysis of \cite{mms} to the very abstract context of a metric measure space satisfying no specific structural assumptions, where \textit{regular domains}\footnote{See Remark \ref{rmk-gauss} for the definition of regular domains.} serve as a generalization of regular balls. In particular, \cite{bcm} specializes the discussion for $\mathsf{RCD}(K,\infty)$ spaces and $BV$ functions. Lastly, in the more recent paper \cite{bps}, a Gauss-Green formula for sets of finite perimeter was proved in the context of an $\mathsf{RCD}(K,N)$ space by means of Sobolev vector fields in the sense of \cite{gi}.

\smallskip

In Section \ref{comparison}, finally, we compare our approach with the results recently obtained in \cite{LS} about the trace operator for $BV$ functions defined by means of Lebesgue points. Our analysis eventually allows to find the optimal conditions to impose on the domain $\Omega$ in order to ensure the coincidence in the $\mathcal{S}^h$-almost everywhere sense - Theorem \ref{u-star-aplim} -  between the rough and the ``classical'' traces, $u^*(x)=\text{T}u(x)$.

\smallskip

Sections \ref{roughtrace} and \ref{comparison} extend and refine the results contained in \cite[Section 7.2]{bu}.

\bigskip

\section{Preliminaries}\label{preliminaries}

\medskip

Throughout this paper, $(\X,d,\mu)$ will be a complete and separable metric measure space equipped with a non-atomic, non-negative Borel measure $\mu$ such that $0<\mu\left(B_\rho(x)\right)<\infty$ for any ball $B_\rho(x)\subset\X$ with radius $\rho>0$ centered at $x\in\X$. By \textit{non-atomic} we mean that for every $x\in\X$ one has $\mu(\{x\})=0$.

We shall assume $\mu$ to be \textit{doubling}:
in other words, there exists a constant $c\ge1$ such that
\begin{equation}\label{doubling}
\mu(B_{2\rho}(x))\leq c \mu\left(B_\rho(x)\right),
\qquad \forall\, x\in \X, \forall \,\rho>0.
\end{equation}

The minimal constant appearing in \eqref{doubling} is called \textit{doubling constant} and will be denoted by $c_D$; $s\coloneqq\ln_2 c_D$ is the \textit{homogeneous 
dimension} of the metric space $\X$
and it is known that the following
property holds:

\begin{equation}\label{hom-dim}
\frac{\mu(B_r(x))}{\mu(B_R(y))}
\geq \frac{1}{c^2_D} \left(
\frac{r}{R}\right)^s,
\end{equation}

for every $y\in\X$, $x\in B_R(y)$, and for every $0<r\le R<\infty$ (see for example \cite[Lemma 3.3]{bjorn}).

The Lebesgue spaces $L^p(\X,\mu)$, $1\leq
p\leq \infty$ are defined in the usual way, \cite{hkst}; 
since in a complete doubling metric measure space
balls are totally bounded, we can equivalently 
set $L^p_{\rm loc}(\X,\mu)$ to denote
the space of functions that belong to 
$L^p(K,\mu)$ for any compact set $K$ or that
belong to $L^p(B_\rho(x_0),\mu)$ for any
$x_0\in \X$ and any $\rho>0$. 

Given a Lipschitz function $f:\X\to \R$, we define the \textit{pointwise Lipschitz constant} of $f$ as
\begin{equation*}
{\rm Lip}(f)(x)\coloneqq\limsup_{y\to x}
 \frac{|f(x)-f(y)|}{d(x,y)}.
\end{equation*}
We assume that the space supports
a weak $(1,1)$--Poincar\'e inequality,
which means that there exist constants 
$c_P>0, \lambda\geq1$ such that for any Lipschitz function $f$
\begin{equation}\label{poincare}
\int_{B_\rho(x)} |f(y)-f_{B_\rho(x)}| 
\mathrm{d}\mu(y) 
\leq 
c_P \rho \int_{B_{\lambda \rho}(x)}
|{\rm Lip}(f)(y)| \mathrm{d}\mu(y),
\end{equation}
where $f_E$ is the mean value
of $f$ over the set $E$, i.e. if
$\mu(E)\neq 0$
\[
f_E \coloneqq\frac{1}{\mu(E)} \int_E f(y)\mathrm{d}\mu(y).
\]
We recall also the definition 
of upper gradient; we say that
a Borel function $g:\X\to [0,+\infty]$
is an \textit{upper gradient} for a measurable
function $f$ if for any rectifiable Lipschitz
curve 
$\gamma:[0,1]\to \X$ with endpoints $x,y\in\X$ we have that
\[
|f(x)-f(y)|\leq \int_\gamma g \coloneqq
\int_0^1 g(\gamma(t)) \|\gamma'(t)\|
\mathrm{d}t
\]
where $\|\gamma'(t)\|={\rm Lip}(\gamma)(t)$.


In what follows, we shall also need to quantify how ``dense'' is a set at a certain point of the space; then, the \textit{upper and lower $\mu$-densities}
of $E\subset\X$ at $x\in \X$ are given by
\[
\Theta^{*}_\mu(E,x)\coloneqq\limsup_{\rho\to 0}
\frac{\mu(E\cap B_\rho(x))}{\mu(B_\rho(x))},
\]
and
\[
\Theta_{*,\mu}(E,x)\coloneqq\liminf_{\rho\to 0}
\frac{\mu(E\cap B_\rho(x))}{\mu(B_\rho(x))}
\]
respectively. The common value between the two limits will be called the \textit{$\mu$-density} of $E$ at $x\in\X$, denoted by
\[
 \Theta_\mu(E,x)\coloneqq\underset{\rho\to 0}{\lim}\frac{\mu\left(E\cap B_\rho(x)\right)}{\mu\left(B_\rho(x)\right)}.
\]
When we work with the reference measure $\mu$ only and there is no ambiguity, we shall drop the suffix from the notation and the above will be simply referred to as the (upper, lower) \textit{density} of $E$ at $x$.

The left continuity of the maps $\rho\mapsto \mu(E\cap B_\rho(x))$
for any Borel set $E$ implies that the maps $x\mapsto \mu(E\cap B_\rho(x))$ are lower semicontinuous.
From this, one deduces that functions $\Theta^*(E,x)$ and 
$\Theta_*(E,x)$ are Borel. 

Following the characterization given for instance in \cite[Definition 3.60]{afp}, for a Borel set $E\subset \X$ we shall denote by $E^{(t)}$, $t\in [0,1]$, the set of points where $E$ has density $t$, namely
\[
 E^{(t)}\coloneqq\left\{x\in\X:\;\Theta(E,x)=\underset{\rho\to 0}{\lim}\frac{\mu\left(B_\rho (x)\cap E\right)}{\mu\left(B_\rho(x)\right)}=t\right\}.
\]
In particular, the sets $E^{(0)}$ and $E^{(1)}$ will be called the \textit{measure-theoretic} (or, \textit{essential}) \textit{exterior} and \textit{interior} of $E$, respectively.

The \textit{measure-theoretic} (or, \textit{essential}) \textit{boundary} of $E$ is then defined as

\begin{equation}\label{dstar}
\partial^*E \coloneqq \X
\backslash(E^{(0)}\cup E^{(1)}).
\end{equation}

Note that we could equivalently characterize $\partial^*E$ as the set of points $x\in\X$ where both $E$ and its complement $E^c$ have positive upper density.

The \textit{lower and upper approximate limits} of any measurable function $u:\X\to\mathbb{R}$ at $x\in\X$ are defined by
\begin{equation}\label{low-aplim}
u^{\wedge}(x)\coloneqq\sup\left\{t\in\overline{\mathbb{R}}:\;\underset{\rho\to 0}{\lim}\frac{\mu\left(B_\rho(x)\cap E_t\right)}{\mu\left(B_\rho(x)\right)}=1\right\}
\end{equation}
and
\begin{equation}\label{up-aplim}
 u^{\vee}(x)\coloneqq\inf\left\{t\in\overline{\mathbb{R}}:\;\underset{\rho\to 0}{\lim}\frac{\mu\left(B_\rho(x)\cap E_t\right)}{\mu\left(B_\rho(x)\right)}=0\right\}
\end{equation}
respectively, where for $t\in\mathbb{R}$, $E_t$ denotes the \textit{super-level sets} of the function $u$, namely
\[
 E_t\coloneqq\left\{x:\;u(x)\ge t\right\}.
\]
We observe that the density condition in \eqref{low-aplim}  is of course equivalent to ask that $\Theta(E_t^c,x)=0$.
The notion of approximate limits allows for the characterization of a \textit{jump set} of the function $u$:
\begin{equation}\label{jump}
 S_u\coloneqq\left\{x\in\X:\;u^{\wedge}(x)<u^{\vee}(x)\right\}.
\end{equation}
So in particular, when $u=\one_E$, one gets $S_u=\partial^*E$.

We also notice that, if $u$ is bounded above and 
$t>\text{ess\,-}\sup u$, then $\mu(E_t)=0$, hence
$u^\vee(x)\leq\text{ess\,-}\sup u$. In the same way, if
$u$ is bounded below, $u^\wedge(x)\geq\text{ess\,-}\inf u$.

\smallskip

Following \cite{A1,amp,Mir}, we now briefly recall the basic notions and properties of functions of bounded variation on metric measure spaces.
Given an open set $\Omega\subset\X$, we
define the \textit{total variation} of a measurable function 
$u:\Omega\to \R$ by setting
\[
\|Du\|(\Omega)
=\underset{\mathcal{A}_u}{\inf}\left\{
\liminf_{j\to +\infty} 
\int_{\Omega} {\rm Lip}(u_j)(y) \mathrm{d}\mu(y)
\right\},
\]
where
\[
 \mathcal{A}_u\coloneqq\left\{\left(u_j\right)_{j\in\mathbb{N}}\subset\mathrm{Lip}_{\mathrm{loc}}(\Omega):\;u_j\to u\:\mathrm{in}\:L^1_{\mathrm{loc}}(\Omega)\right\}.
\]

\begin{definition}
Given $u\in L^1(\Omega)$, we say that
$u$ has bounded variation in $\Omega$, $u\in BV(\Omega)$,
if $\|Du\|(\Omega)<+\infty$. A set $E\subset \X$
is said to have finite perimeter in $\X$ if $\one_E\in 
BV(\X)$, and similarly to have finite perimeter in $\Omega$ if $\one_E\in BV(\Omega)$.
\end{definition}

Sets of finite perimeter will be also referred to as \textit{Caccioppoli sets}.

A function $u\in BV(\Omega)$ defines a non-negative
Radon measure $\|Du\|$, the \textit{total variation measure}; when $u$ is the characteristic
function of some set $E$, $u=\one_E$, then $\|D\one_E\|$ is called \textit{perimeter measure}. 

A very important tool for our work will be the \textit{Coarea Formula}, \cite{Mir}, which asserts that for $u\in BV(\Omega)$, then for almost every $t\in \R$ the set $E_t$ has finite perimeter in $\Omega$ and for any Borel set $A$
\[
\|Du\|(A)=\int_\R \|D\one_{E_t}\|(A)
\text{d}t.
\]
The Poincar\'e inequality and
the Sobolev embedding Theorem (see for 
instance \cite{A1}, \cite{HK} or \cite{Mir}) imply the following local isoperimetric inequality:
for any set $E$ with finite perimeter
and for any ball $B_\rho(x)$, we 
have that
\begin{align}\label{isoper}\begin{split}
\min\big\{\mu(E\cap B_\rho(x)), & \,\,\mu(B_\rho(x)\backslash E)\big\} \\ &\leq c_I \left(\frac{\rho^s}{\mu(B_\rho(x))}
\right)^{\frac{1}{s-1}}\|D\one_E\|\left(B_{\lambda\rho}(x)\right)^{\frac{s}{s-1}},\end{split}
\end{align}
where $c_I>0$ is known as the \textit{isoperimetric constant}.

We also mention that a weaker version of the Poincar\'e inequality holds for $BV$ functions as well: given any ball $B_\rho(x)\subset\X$, for every $u\in BV(\X)$ there holds
\begin{equation}\label{poincarebv}
 \int_{B_\rho(x)} \left|u-u_B\right|\text{d}\mu\le c_P\rho\|Du\|\!\left(B_{\lambda\rho}(x)\right).   
\end{equation}
Of course, both in \eqref{isoper} and \eqref{poincarebv} the notation is the same as in \eqref{poincare}.

\medskip

Two important properties of the perimeter measure of Caccioppoli sets, which we shall use extensively, are its absolute continuity with respect to the spherical Hausdorff measure and its localization inside the essential boundary, \cite{A1}.

Let us denote by ${\cal S}^h$ the spherical Hausdorff measure defined in terms of the doubling function
\[
h(B_\rho(x))=\frac{\mu(B_\rho(x))}{{\rm diam} (B_\rho(x))}.
\]
If $E\subset\X$ is a Caccioppoli set in $\X$, then we have the following result.


\begin{thm}\label{locality}{\normalfont{\cite[Theorem 5.3]{A1}}} The measure $\|D\one_E\|$ is concentrated on the set
\[
 \Sigma_\gamma = \left\{x:\;\underset{\rho\to 0}{\lim\sup}\min \left\{\frac{\mu\left(E\cap B_\rho(x)\right)}{\mu\left(B_\rho(x)\right)},\frac{\mu\left(E^c\cap B_\rho(x)\right)}{\mu\left(B_\rho(x)\right)}\right\}\ge\gamma\right\}\subset\partial^{*}E,
\]
where $\gamma=\gamma\left(c_D,c_I,\lambda\right)$. Moreover, $\mathcal{S}^h\left(\partial^{*}E\backslash\Sigma_\gamma\right)=0$, $\mathcal{S}^h\left(\partial^{*}E\right)<\infty$  and
\[
 \|D\one_E\|(B)=\int_{B\cap\partial^{*}E}\theta_E\mathrm{d}\mathcal{S}^h
\]
for any Borel set $B\subset\X$ and for some Borel map $\theta_E:\X\to[\alpha,\infty)$ with $\alpha=\alpha\left(c_P,c_I,\lambda\right)>0$. 
\end{thm}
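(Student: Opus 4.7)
The plan is to follow Ambrosio's strategy from \cite{A1}: obtain matching density estimates for $\|D\one_E\|$ with respect to $\mathcal{S}^h$ via the isoperimetric inequality \eqref{isoper}, and then invoke the differentiation theory of Radon measures in the doubling space. The isoperimetric inequality will be used twice in opposite directions — to produce perimeter mass where both $E$ and $E^c$ occupy a fixed fraction of small balls, and to force perimeter decay where one of the two densities collapses.

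The first step is the lower density estimate on $\Sigma_\gamma$. For $x \in \Sigma_\gamma$ I would pick a sequence $\rho_k \to 0$ realising the limsup, so that both $\mu(E \cap B_{\rho_k}(x))/\mu(B_{\rho_k}(x))$ and $\mu(E^c \cap B_{\rho_k}(x))/\mu(B_{\rho_k}(x))$ are $\geq \gamma$; inserting this into \eqref{isoper} and solving for the perimeter yields
\[
\|D\one_E\|(B_{\lambda\rho_k}(x)) \;\geq\; c_I^{-(s-1)/s}\gamma^{(s-1)/s}\,\frac{\mu(B_{\rho_k}(x))}{\rho_k} \;\asymp\; h(B_{\rho_k}(x)),
\]
since $\mathrm{diam}(B_\rho)\leq 2\rho$; doubling absorbs the $\lambda$-dilation and produces $\limsup_{\rho\to 0}\|D\one_E\|(B_\rho(x))/h(B_\rho(x)) \geq \alpha(c_P,c_I,\lambda) > 0$ for every $x \in \Sigma_\gamma$. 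A standard covering-type inequality for Radon measures in doubling spaces then upgrades this pointwise information to the measure inequality $\|D\one_E\|\mrestr\Sigma_\gamma \geq \alpha\,\mathcal{S}^h\mrestr\Sigma_\gamma$, and in particular $\mathcal{S}^h(\Sigma_\gamma) \leq \alpha^{-1}\|D\one_E\|(\X) < \infty$.

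Next I would prove the complementary statements $\|D\one_E\|(\X\setminus\Sigma_\gamma) = 0$ and $\mathcal{S}^h(\partial^*E\setminus\Sigma_\gamma) = 0$. At any $x \notin \Sigma_\gamma$ there is $\rho_0>0$ such that for all $\rho<\rho_0$ either $\mu(E\cap B_\rho(x)) < \gamma\mu(B_\rho(x))$ or $\mu(E^c\cap B_\rho(x)) < \gamma\mu(B_\rho(x))$; applying \eqref{isoper} along a sequence of nested dyadic radii and iterating forces the minority mass to decay geometrically, and with it the ratio $\|D\one_E\|(B_\rho(x))/h(B_\rho(x))\to 0$. Combined with the routine upper-density bound $\limsup \|D\one_E\|(B_\rho)/h(B_\rho) < \infty$ holding $\mathcal{S}^h$-a.e. (obtained from the finiteness of $\|D\one_E\|$ via the $5r$-covering lemma applied to the super-level sets of the upper density), a Vitali covering of $\partial^*E\setminus\Sigma_\gamma$ by such "decay" balls shows this set is $\mathcal{S}^h$-negligible; the concentration of $\|D\one_E\|$ on $\Sigma_\gamma \subset \partial^*E$ follows at once, together with $\mathcal{S}^h(\partial^*E)<\infty$. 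The representation $\|D\one_E\| = \theta_E\,\mathcal{S}^h\mrestr\partial^*E$ is finally obtained by Radon–Nikodym, with the lower bound $\theta_E \geq \alpha$ inherited from Step~1 and Borel measurability from the lower semicontinuity of $x\mapsto\mu(E\cap B_\rho(x))$ recorded earlier in the section. The main obstacle I expect is precisely the decay step: converting the single-scale gap $\min\{\cdot,\cdot\}<\gamma$ into quantitative geometric decay of the minority mass across nested scales requires iterating \eqref{isoper} carefully and using doubling to compare measures, and it is this iteration — rather than the one-shot lower bound of Step~1 — that most decisively uses the Poincaré assumption through the constant $c_I$.
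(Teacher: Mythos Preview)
The paper does not supply a proof of this statement: Theorem~\ref{locality} is quoted from \cite[Theorem~5.3]{A1} and closed immediately with a bare $\square$, so there is no in-paper argument to compare your proposal against.

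Your outline is a faithful high-level reconstruction of Ambrosio's strategy in \cite{A1} --- a lower perimeter-density bound on $\Sigma_\gamma$ obtained by reading the relative isoperimetric inequality \eqref{isoper} backwards, a concentration/negligibility argument off $\Sigma_\gamma$, and Radon--Nikodym differentiation against $\mathcal{S}^h$ to extract $\theta_E$ --- and is correct in spirit. You are also right to flag the decay step as the main obstacle. One caution: \eqref{isoper} bounds the minority mass \emph{by} the perimeter, not the reverse, so ``iterating \eqref{isoper} along nested dyadic radii'' does not by itself force either the minority mass or the ratio $\|D\one_E\|(B_\rho(x))/h(B_\rho(x))$ to zero at points outside $\Sigma_\gamma$. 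In \cite{A1} this gap is closed not by a bare dyadic iteration of \eqref{isoper} but by first establishing that $\|D\one_E\|$ is asymptotically doubling and then invoking differentiation theory; the localisation bound \eqref{localization} on $m'_E$ also enters. If your intent is to produce a self-contained proof here, that is the place where the sketch would need to be fleshed out.
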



\begin{remark}\label{per-bound}
 We explicitly observe that by \cite[Theorem 4.6]{amp} one actually has $\theta_E\in [\alpha,c_D]$; thus, with the same notation as in Theorem \ref{locality}, we are given the bounds
\begin{equation*}
 \alpha\mathcal{S}^h(B\cap\partial^* E)\le \|D\one_E\|(B)\le c_D\mathcal{S}^h(B\cap\partial^* E).
\end{equation*}
\end{remark}


\begin{definition}\label{local-space}{\normalfont{\cite{amp}}} The space $(\X,d,\mu)$ will be called \textit{local} if, given any two Caccioppoli sets $E,\Omega\subset\X$ with $E\subset\Omega$, one has that the maps arising from Theorem \ref{locality}, $\theta_E$ and $\theta_\Omega$, coincide $\mathcal{S}^h$-almost everywhere on $\partial^*\Omega\cap\partial^*E$. 
 \end{definition}


For instance, $\R^n$ equipped with the Euclidean distance and with the $n$-dimensional Lebesgue measure, is of course a local space; among other examples, we mention  non-collapsed $\mathsf{RCD}(K,N)$ spaces, Carnot groups of step 2 and certain classes of ``weighted'' spaces. See also \cite[Section 7]{amp} for a detailed discussion on possible examples of local spaces.


\begin{remark}\label{bv-decomp}
 In \cite[Theorem 5.3]{amp}, it was proved that for any function $u\in BV(\Omega)$, $\Omega\subset\X$ open set, the total variation measure $\|Du\|$ admits a decomposition into an ``absolutely continuous'' and a ``singular'' part, and that the latter is decomposable into a ``Cantor'' and a ``jump'' part. In other words, for any Borel set $B\subset\Omega$ the following holds:
 \begin{align*}
  \|Du\|(B) & = \|Du\|^{\text{a}}(B)+\|Du\|^{\text{s}}(B) \\
  & = \|Du\|^{\text{a}}(B)+\|Du\|^{\text{c}}(B)+\|Du\|^{\text{j}}(B) \\
  & = \int_{B} \mathfrak{a}\, \text{d}\mu + \|Du\|^{\text{c}}(B)+ \int_{B\cap S_u}\int_{u^\wedge(x)}^{u^\vee(x)}\theta_{\{u\ge t\}}(x)\text{d}t\text{d}\mathcal{S}^h(x),
 \end{align*}
where $\mathfrak{a}\in L^1(B)$ is the density of the absolutely continuous part, $\theta_{\{u\ge t\}}$ is given as in Theorem \ref{locality} and $S_u$ is the jump set as in \eqref{jump}.
\end{remark}


Another fact which we shall use is the following
localization property: if $E\subset \Omega$ has finite perimeter in an open set $\Omega$, then for all $x\in\Omega$ the function
\begin{equation*}\label{defmE}
m_E(x,\rho):=\mu(E\cap B_\rho(x))
\end{equation*}
is monotone non-decreasing as a function of $\rho$.  If it is differentiable at $\rho>0$, then
\begin{equation}\label{localization}
 \|D\one_{E\cap B_\rho(x)}\|(\Omega)\le m'_E(x,\rho)+\|D\one_E\|\left(\Omega\cap\overline{B}_\rho(x)\right).
\end{equation}
The proof of \eqref{localization} follows by considering
a cut--off function
\[
\eta_h(y)=\frac{1}{h} \min\{\max\{\rho+h-d(x,y),0\},1\}
\]
and defining $u_h=\eta_h \one_E$. Since
\[
\|Du_h\|(\Omega)\leq \frac{1}{h}\int_{
B_{\rho+h}(x)\backslash B_\rho(x)\cap \Omega} 
\one_E(y)\mathrm{d}\mu(y)
+\|D\one_E\|\left(B_{\rho+h}(x)\cap \Omega\right),
\]
passing to the limit as $h\to0$ (which is possible since we are assuming $m_E(x,\rho)$ to be differentiable with respect to $\rho>0$) and using the
lower semicontinuity of the total variation, 
we get \eqref{localization}.
If in particular $\|D\one_E\|\left(\Omega\cap \partial
B_\rho(x)\right)=0$, we then get
\[
\|D\one_{E\cap B_\rho(x)}\|\left(\partial B_\rho(x)\cap \Omega\right)
\leq m'_E(x,\rho).
\]

\bigskip

\section{Rough Trace}\label{roughtrace}

\medskip

In this section we extend the notion of rough trace of a $BV$ function to the metric measure space setting. The discussion will follow the monograph by V. Maz'ya \cite[Section 9.5]{Ma} and metric versions of the results contained therein will be given. In particular, we shall focus on the issue of the integrability of the rough trace with respect to the perimeter measure of the domain. We shall relate this issue with some geometric properties of the domain.

Below, $\Omega\subset\X$ shall always denote an open set. We always write $\|D\one_\Omega\|(\X)<\infty$ to intend $\one_\Omega\in BV(\X)$, and similarly, when $E\subset\Omega$, $\|D\one_E\|(\Omega)<\infty$ to intend $\one_E\in BV(\Omega)$, that is, to mean that the sets are of finite perimeter in $\X$ and $\Omega$, respectively.

\medskip

\begin{definition}\label{rough-def}{\normalfont{(Rough Trace)}}
Given $u\in BV(\Omega)$, we define its \textit{rough trace} at $x\in \partial^* \Omega$ as the quantity
\begin{equation}\label{rgh-tr}
u^{*}(x):=
\sup\Big\{ t\in\R: 
\|D\one_{E_t}\|(\X)<\infty,\:x\in\partial^{*}E_{t}
\Big\}.
\end{equation}
\end{definition}

Of course, when $u$ has a limit value at $x\in\partial^{*}\Omega$ from the interior of the domain, then 
\[
u^{*}(x)=\lim_{\Omega\ni y\to x} u(y).
\]


\begin{remark} We explicitly observe that, according to \eqref{rgh-tr}, it might occur that $u^*(x)=-\infty$, which obviously corresponds to the case when
\[
\Big\{ t\in\R: 
\|D\one_{E_t}\|(\X)<\infty,\:x\in\partial^{*}E_{t}
\Big\}=\emptyset. 
\]
\end{remark}


We start with the following result.

\smallskip

\begin{lemma}\label{Lemma31}
If $\|D\one_\Omega\|(\X)<\infty$ and $u\in BV(\Omega)$, then $u^{*}$ is ${\cal S}^{h}$-
measurable on $\partial^{*}\Omega$ and
\begin{equation}
{\cal S}^{h}\left(\left\{ x\in\partial^{*}\Omega:\:u^{*}(x)\ge t\right\} \right)={\cal S}^{h}\left(\partial^{*}\Omega\cap\partial^{*}E_{t}\right)\label{eq:2.1}
\end{equation}
for almost every $t\in\mathbb{R}$.
\end{lemma}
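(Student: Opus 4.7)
The key observation is that $E_t=\{u\geq t\}\subset\Omega$ for every $t$, so $E_t^c\supset\Omega^c$ and hence $\Theta^*(E_t^c,x)\geq\Theta^*(\Omega^c,x)>0$ for every $x\in\partial^*\Omega$. Consequently, on $\partial^*\Omega$ one has $x\in\partial^*E_t$ if and only if $\Theta^*(E_t,x)>0$. Since $t\mapsto E_t$ is non-increasing, this condition is itself monotone in $t$: if $s\leq t$ and $x\in\partial^*E_t\cap\partial^*\Omega$, then $\Theta^*(E_s,x)\geq\Theta^*(E_t,x)>0$, so $x\in\partial^*E_s$.

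The first substantive step is to dispose of the perimeter constraint appearing in Definition \ref{rough-def}. By the Coarea Formula applied to $u\in BV(\Omega)$, $\|D\one_{E_t}\|(\Omega)<\infty$ for a.e.\ $t\in\mathbb{R}$; combined with $\|D\one_\Omega\|(\X)<\infty$ and the elementary comparison $\|D\one_{E_t}\|(\X)\leq\|D\one_{E_t}\|(\Omega)+\|D\one_\Omega\|(\X)$ (valid whenever $E_t\subset\Omega$, since the jump of $\one_{E_t}$ on $\partial\Omega$ is pointwise dominated by that of $\one_\Omega$ while $\one_{E_t}\equiv 0$ outside $\overline\Omega$), we conclude $\|D\one_{E_t}\|(\X)<\infty$ off a Lebesgue-null set of parameters. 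Together with the monotonicity observation this gives the cleaner representation, on $\partial^*\Omega$,
\[
u^*(x)=\sup\{t\in\mathbb{R}:x\in\partial^*E_t\}.
\]
Measurability follows at once: $\Theta^*,\Theta_*$ are Borel (Section \ref{preliminaries}), so each $\partial^*E_s$ is Borel, and $\{x\in\partial^*\Omega:u^*(x)>t\}=\bigcup_{s\in\mathbb{Q},\,s>t}(\partial^*\Omega\cap\partial^*E_s)$ is Borel.

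For the measure identity \eqref{eq:2.1}, the monotonicity yields the sandwich
\[
\{u^*>t\}\cap\partial^*\Omega\;\subset\;\partial^*\Omega\cap\partial^*E_t\;\subset\;\{u^*\geq t\}\cap\partial^*\Omega.
\]
The right inclusion is direct from the representation of $u^*$; the left follows because $u^*(x)>t$ exhibits some $s>t$ with $x\in\partial^*E_s$, forcing $x\in\partial^*E_t$ by the monotonicity. The function $f(t):=\mathcal{S}^h(\{u^*\geq t\}\cap\partial^*\Omega)$ is non-increasing and bounded by $\mathcal{S}^h(\partial^*\Omega)<\infty$ (Theorem \ref{locality}), so continuous off a countable set; at every such continuity point the sandwich forces $\mathcal{S}^h(\partial^*\Omega\cap\partial^*E_t)=f(t)$, which is \eqref{eq:2.1} for a.e.\ $t\in\mathbb{R}$. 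The only real subtlety in this plan is verifying the perimeter comparison used to dispose of the finite-perimeter constraint; once that is in hand, everything reduces to the monotone structure of the super-level sets.
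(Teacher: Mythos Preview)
Your overall strategy coincides with the paper's: both proofs exploit the monotonicity $s\le t\Rightarrow\partial^*E_t\cap\partial^*\Omega\subset\partial^*E_s\cap\partial^*\Omega$ on $\partial^*\Omega$, and then use $\mathcal{S}^h(\partial^*\Omega)<\infty$ to confine the bad levels to a countable set. Where you argue via the sandwich $\{u^*>t\}\subset\partial^*\Omega\cap\partial^*E_t\subset\{u^*\ge t\}$ together with continuity of the monotone function $f(t)=\mathcal{S}^h(\{u^*\ge t\})$ off a countable set, the paper instead sets $F_t=\{u^*\ge t\}\setminus(\partial^*\Omega\cap\partial^*E_t)$ and shows directly that these are pairwise disjoint subsets of $\partial^*\Omega$; the two devices are equivalent.

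The genuine gap is in your first step, where you ``dispose of the perimeter constraint'' via the comparison $\|D\one_{E_t}\|(\X)\le\|D\one_{E_t}\|(\Omega)+\|D\one_\Omega\|(\X)$. This inequality is \emph{false} under the stated hypotheses, and your heuristic (domination of jumps on $\partial\Omega$) breaks precisely at points of $\Omega^{(1)}\cap\partial\Omega$. In $\R^2$, let $f:[-1,1]\to(-\tfrac12,\tfrac12)$ be continuous with infinite total variation, put $K=\mathrm{graph}(f)$ and $\Omega=(-1,1)^2\setminus K$. Since $\mu(K)=0$ one has $\|D\one_\Omega\|(\R^2)=8$; the upper component $E$ of $\Omega$ is clopen in $\Omega$, so $\one_E\in\mathrm{Lip}_{\mathrm{loc}}(\Omega)$ with $\|D\one_E\|(\Omega)=0$; yet $E$ is, up to a null set, the supergraph of a non-$BV$ function, hence $\|D\one_E\|(\R^2)=\infty$. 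Taking $u=\one_E\in BV(\Omega)$, the set $\{t:\|D\one_{E_t}\|(\X)<\infty\}$ misses all of $(0,1]$, so the finite-perimeter clause in Definition~\ref{rough-def} genuinely alters $u^*$ and your simplified formula $u^*(x)=\sup\{t:x\in\partial^*E_t\}$ fails (indeed \eqref{eq:2.1} itself fails for this $u$ on $(0,1)$). The paper's proof glosses over the same point---it simply fixes a null set $I$ off which ``$E_t$ has finite perimeter'' without saying where or why---so this is a shared lacuna; both arguments go through once one adds the hypothesis $\mathcal{S}^h(\partial\Omega\setminus\partial^*\Omega)=0$, which via Remark~\ref{per-global} forces $\|D\one_{E_t}\|(\X)<\infty$ for a.e.\ $t$.
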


\begin{proof}
We fix a set $I\subset \R$ such that $|I|=0$ and $E_t$ has finite perimeter for any $t\in \R\backslash I$.
We define
\begin{equation*}
A_t\coloneqq\{ x\in\partial^*\Omega: u^*(x)\ge t\}\quad\mathrm{and}\quad B_t \coloneqq \partial^*E_t\cap\partial^*\Omega.
\end{equation*}
Observe that $B_t$ and $\partial^*\Omega$ are Borel sets, and that the definition of rough trace allows us to write
\begin{equation*}
 A_t = \bigcup_{s\in D;\:s>t} B_s\,,
\end{equation*}
for some countable, dense set $D\subset\R\backslash I$. Therefore, $A_t$ is a Borel set and then $u^*$ is a Borel function.

Now, instead of \eqref{eq:2.1}, we shall prove
that for every $t\in\R\backslash I$ - except at most countably many values - it holds
\[
{\cal S}^{h}\left(A_t \triangle B_t\right)=0,
\]
where $\triangle$ denotes the symmetric difference between two sets, $A\triangle B\coloneqq (A\backslash B)\cup(B\backslash A)$. If $x\in B_t$, the definition of $u^*$ implies that $u^*(x)\ge t$ and then the inclusion
$B_t\subset A_t$ holds. We then reduce ourselves to prove that ${\cal S}^{h}\left(F_{t}\right)=0$, where $F_{t}\coloneqq A_{t}\backslash B_{t}$. Since for $s<t$ we have that $E_t\subset E_s\subset \Omega$, we also get
\[
\frac{\mu(E_t\cap B_\rho(x))}{\mu(B_\rho(x))}
\leq
\frac{\mu(E_s\cap B_\rho(x))}{\mu(B_\rho(x))}
\leq
\frac{\mu(\Omega\cap B_\rho(x))}{\mu(B_\rho(x))},
\]
whence
\[
\Theta_*(E_t,x)\leq\Theta_*(E_s,x)\leq
\Theta^*(E_s,x)\leq \Theta^*(\Omega,x)
\]
and so the inclusion $B_t\subset B_s$ holds
true. From this we deduce that the sets $F_t$ are disjoint; indeed if $s<t$,
\[
F_t\cap F_s=(A_t\backslash B_t)\cap 
(A_s\backslash B_s)
=A_t\cap A_s \cap B_t^c\cap B_s^c
=A_t\cap B_s^c=A_t\backslash B_s=\emptyset
\]
since, if $x\in A_t$, then there exists $\tau\in [t,u^*(x)]$ such that $x\in \partial^*E_\tau\cap \partial^*\Omega=B_\tau\subset B_s$. 
The inclusion
$F_t\subset \partial^*\Omega$ then
implies that the set
\[
 \left\{t\in\mathbb{R}\backslash I:\:\mathcal{S}^h\left(F_t\right)>0\right\}
\]
is at most countable, and this concludes the proof.

\end{proof}

\smallskip

The result below is simply a combination of \cite[Section 9.5, Lemma 4 and Corollary 2]{Ma}, so we just state it, leaving the proof to the interested reader.

\begin{proposition} 
For any $u\in BV(\Omega)$ and for ${\cal S}^{h}$-almost
every $x\in\partial^{*}\Omega$, one has
\[
-u^{*}(x)=\left(-u\right)^{*}(x).
\]
Consequently, if we decompose $u=u^+-u^-$ in its positive
and negative part, then
\[
\left(u^{*}\right)^{+}=
\left(u^{+}\right)^{*},\qquad
\left(u^{*}\right)^{-}=\left(u^{-}\right)^{*}
\]
and then 
\[
u^{*}=\left(u^{+}\right)^{*}-\left(u^{-}
\right)^{*}.
\]
\end{proposition}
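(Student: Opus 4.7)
The core identity is $(-u)^*(x) = -u^*(x)$ for $\mathcal{S}^h$-a.e.\ $x\in\partial^*\Omega$; the positive/negative-parts identities then follow by direct inspection. Setting $v\coloneqq -u$, I relate the super-level sets via
\[
E_t^v = \{u\le -t\} = \Omega\setminus\{u>-t\},
\]
and observe that for $\mathcal{L}^1$-a.e.\ $t\in\R$ one has $\mu(\{u=-t\})=0$ (by Fubini applied to $(x,t)\mapsto\one_{\{u(x)=t\}}$), so $E_t^v = \Omega\setminus E_{-t}$ modulo $\mu$-null sets, writing $E_s\coloneqq\{u\ge s\}$. Since $\|D\one_\Omega\|(\X)<\infty$ is in force throughout this section, the triangle inequality for the total variation gives $\|D\one_{E_t^v}\|(\X)<\infty \iff \|D\one_{E_{-t}}\|(\X)<\infty$, so admissibility of $t$ for $v^*$ corresponds to admissibility of $-t$ for $u^*$.

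\textbf{Reduction to a threshold equality.} At $x\in\partial^*\Omega$, $\Theta^*(\Omega^c,x)>0$, and every $E\subset\Omega$ satisfies $\Theta^*(E^c,x)\geq\Theta^*(\Omega^c,x)>0$, ruling out $x\in E^{(1)}$; the same applies to $\Omega\setminus E$. Hence
\[
x\in\partial^*E\iff \Theta^*(E,x)>0,\qquad x\in\partial^*(\Omega\setminus E)\iff \Theta^*(\Omega\setminus E,x)>0.
\]
Monotonicity of $s\mapsto E_s$ makes $s\mapsto\Theta^*(E_s,x)$ non-increasing and $s\mapsto\Theta^*(\Omega\setminus E_s,x)$ non-decreasing. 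Setting
\[
L(x)\coloneqq\sup\{s : E_s \text{ admissible and } \Theta^*(E_s,x)>0\}=u^*(x),\quad R(x)\coloneqq\inf\{s : E_s\text{ admissible and }\Theta^*(\Omega\setminus E_s,x)>0\},
\]
the identifications of the previous paragraph yield $v^*(x)=-R(x)$, so the claim reduces to $L(x)=R(x)$ for $\mathcal{S}^h$-a.e.\ $x\in\partial^*\Omega$.

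\textbf{Pointwise bound and the $\mathcal{S}^h$-null gap.} For $s>L(x)$ admissible, the definition of $u^*$ forces $\Theta(E_s,x)=0$, so $\Theta^*(\Omega\setminus E_s,x)=\Theta^*(\Omega,x)>0$ and hence $R(x)\le s$; letting $s\searrow L(x)$ yields $R(x)\le L(x)$, equivalently $v^*(x)\ge -u^*(x)$ pointwise on $\partial^*\Omega$. The reverse bound $R(x)\ge L(x)$ can fail at ``doubly-boundary'' points where, for some intermediate $s\in(R(x),L(x))$, both $\Theta^*(E_s,x)>0$ and $\Theta^*(\Omega\setminus E_s,x)>0$; these correspond geometrically to points of $\partial^*\Omega$ lying in the boundary jump set of $u$. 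To show this set is $\mathcal{S}^h$-negligible I would apply Lemma~\ref{Lemma31} to both $u$ and $v$, obtaining
\[
\mathcal{S}^h(\{u^*\ge t\})=\mathcal{S}^h(B_t^u),\qquad \mathcal{S}^h(\{v^*\ge t\})=\mathcal{S}^h(B_t^v),
\]
for $\mathcal{L}^1$-a.e.\ $t$, with $B_t^w\coloneqq\partial^*\Omega\cap\partial^*E_t^w$, and combining them with the monotonicity of $s\mapsto B_s^u$ and the countability of its discontinuities (the key mechanism in the proof of Lemma~\ref{Lemma31}) to force the doubly-boundary set to be $\mathcal{S}^h$-null.

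\textbf{Consequences and main obstacle.} Once $(-u)^*=-u^*$ holds $\mathcal{S}^h$-a.e., the identity $(u^+)^*=(u^*)^+$ is pointwise: $\{u^+\ge t\}=\Omega$ for $t\le 0$ (of finite perimeter by hypothesis, with $\partial^*\Omega$ contained in its essential boundary) and $\{u^+\ge t\}=E_t$ for $t>0$, so inspecting the supremum in~\eqref{rgh-tr} gives $(u^+)^*(x)=\max\{u^*(x),0\}=(u^*(x))^+$. Applying the same reasoning to $-u$ and invoking $(-u)^*=-u^*$ a.e.\ yields $(u^-)^*=(u^*)^-$ a.e., and subtracting produces the decomposition $u^*=(u^+)^*-(u^-)^*$. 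The main technical obstacle is the $\mathcal{S}^h$-negligibility of the doubly-boundary set in the previous paragraph: this genuinely depends on Lemma~\ref{Lemma31} and the monotone structure of $B_s^u$ in $s$, and cannot be obtained by pointwise reasoning alone.
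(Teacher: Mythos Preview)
The paper itself gives no proof here: it simply cites Maz'ya's Euclidean argument and moves on, so there is nothing in the paper to compare line by line. Your reconstruction is largely correct: the identification $E_t^{-u}=\Omega\setminus E_{-t}$ modulo $\mu$-null sets, the observation that for $x\in\partial^*\Omega$ and $E\subset\Omega$ one has $x\in\partial^*E\iff\Theta^*(E,x)>0$, the pointwise bound $R(x)\le L(x)$ (hence $(-u)^*\ge -u^*$ on $\partial^*\Omega$), and the deduction of the positive/negative-part identities from the first one are all fine.

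The gap is precisely where you locate it, but the mechanism you propose does not close it. Applying Lemma~\ref{Lemma31} to $u$ and to $-u$ only yields $\{L\ge s\}=B_s$ and $\{R\le s\}=C_s$ up to $\mathcal S^h$-null sets for a.e.\ $s$, with $B_s=\partial^*\Omega\cap\partial^*E_s$ and $C_s=\partial^*\Omega\cap\partial^*(\Omega\setminus E_s)$; combined, this merely reproduces $\{L\ge s\ge R\}=B_s\cap C_s$ up to null sets, which is tautological and gives no control on $\mathcal S^h(B_s\cap C_s)$. The disjointness trick powering Lemma~\ref{Lemma31} (the sets $F_t=A_t\setminus B_t$ are pairwise disjoint in $t$) cannot transfer here, because the sets $B_s\cap C_s$ are \emph{not} disjoint in $s$: whenever $L(x)>R(x)$, the point $x$ lies in $B_s\cap C_s$ for \emph{every} $s\in(R(x),L(x))$. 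Indeed a direct Fubini computation gives
\[
\int_{\partial^*\Omega}(L-R)\,\d\mathcal S^h=\int_{\R}\mathcal S^h(B_s\cap C_s)\,\d s,
\]
so what must actually be shown is the trace dichotomy $\mathcal S^h\bigl(\partial^*\Omega\cap\partial^*E_s\cap\partial^*(\Omega\setminus E_s)\bigr)=0$ for a.e.\ $s$. In the Euclidean setting Maz'ya obtains this from the reduced-boundary/Federer structure; in the present metric framework it is essentially the statement that the interior trace of $\one_{E_s}$ on $\partial^*\Omega$ is $\{0,1\}$-valued $\mathcal S^h$-a.e., which requires an input beyond Lemma~\ref{Lemma31} and monotonicity---compare Proposition~\ref{du-zero} and Remark~\ref{eq-aplim} later in the paper, where the analogous fact $\bar u^\wedge=\bar u^\vee$ a.e.\ on $\partial\Omega$ is obtained from $\|D\bar u\|(\partial\Omega)=0$.
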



\begin{remark}\label{per-global}
Throughout the remainder of the paper, we shall always work in the hypothesis that $\Omega\subset\X$ is an open set with finite perimeter in $\X$; that is, $\|D\one_{\Omega}\|(\X)<\infty$.

Moreover, $E\subset\Omega$ will always be a Caccioppoli set in $\Omega$, which means $\|D\one_E\|(\Omega)<\infty$. We observe that, since in each of the next statements we shall assume $\partial\Omega\backslash\partial^*\Omega$ to be $\mathcal{S}^h$-negligible, this will imply that $\|D\one_E\|(\X)<\infty$ as well. Indeed, by Theorem \ref{locality}, $\|D\one_\Omega\|(\X)<\infty$ implies $\mathcal{S}^h(\partial^*\Omega)<\infty$, and therefore the condition $\mathcal{S}^h(\partial\Omega\backslash\partial^*\Omega)=0$ forces $\mathcal{S}^h(\partial\Omega)<\infty$; therefore, as $\|D\one_E\|(\Omega)<\infty$, an application of \cite[Proposition 6.3]{kkst1} yields $\|D\one_E\|(\X)<\infty$ as claimed. 
\end{remark}

\smallskip

In the next results, we will often make use of the following simple property of the rough trace:


\begin{remark}\label{rough-bdry}
 Let $\Omega\subset\X$ be such that $\|D\one_\Omega\|(\X)<\infty$. If $E\subset\Omega$ is a Caccioppoli set in $\Omega$, then $\one_E^*(x)=0$ for all $x\in\partial^*\Omega\backslash\partial^*E$ and $\one_E^*(x)=1$ for all $x\in\partial^*\Omega\cap\partial^*E$.
 
Indeed, when considering the characteristic function of $E$ one of course has
\[
E_{t}=\left\{ x\in\Omega:\:\mathds1_{E}\ge t\right\} =\begin{cases}
\emptyset, & t>1\\
\Omega, & t\le0\\
\Omega\cap E=E, & t\in (0,1].
\end{cases}
\]
This means, obviously,
\[
\partial^{*}E_{t}=\begin{cases}
\emptyset, & t>1\\
\partial^{*}\Omega, & t\le0\\
\partial^{*}\left(\Omega\cap E\right)=\partial^{*}E, & t\in (0,1].
\end{cases}
\]
So, when $t\in(0,1]$, the definition of rough trace \eqref{rgh-tr} forces $\one_E^*(x)=1$ for every $x\in\partial^*E$.

Let us then assume $t\le0$; again by \eqref{rgh-tr}, in order to have $x\in\partial^*E_t=\partial^*\Omega$, it must be $\one_E^*(x)=0$ for every $x$ therein. Thus, combining with the conclusion right above, we infer that $\one_E^*(x)=0$ for all $x\in\partial^*\Omega\backslash\partial^*E$ and $\one_E^*(x)=1$ for all $x\in\partial^*\Omega\cap\partial^*E$, proving the claim.
\end{remark}


With these preliminary facts at our disposal, we can start discussing the summability of the rough trace.


\begin{thm}
Let $\left\Vert D\mathds1_{\Omega}\right\Vert ({\X})<\infty$
and assume ${\cal S}^{h}\left(\partial\Omega\backslash\partial^{*}\Omega\right)=0$.
In order for any $u\in BV(\Omega)$ to satisfy
\begin{equation}\label{inf-l1-rough}
\inf_{c\in \R}
\int_{\partial\Omega}\left|u^{*}(x)-c\right|\mathrm{d}{\cal S}^{h}(x)\le k_1\left\Vert Du\right\Vert (\Omega)
\end{equation}
with $k_1>0$ independent of $u$, it is necessary and sufficient that the inequality
\begin{equation}\label{min-per}
\min\left\{\|D\one_E\|\left(\Omega^c\right), \|D\one_{\Omega\backslash E}\|\left(\Omega^c\right)\right\} 
\le 
k_2 \|D\one_E\|\left(\Omega\right)
\end{equation}
holds for any $E\subset\Omega$ with finite perimeter in $\Omega$.
\end{thm}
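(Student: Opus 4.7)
The plan is to run the necessary and sufficient implications along parallel tracks, using Lemma~\ref{Lemma31} and Remark~\ref{per-bound} to translate between the $\mathcal{S}^h$-language on $\partial\Omega$ (natural for rough traces) and the perimeter-measure language on $\Omega^c$ (natural for \eqref{min-per}), with the coarea formula serving as the bridge between $\|Du\|(\Omega)$ and the family of perimeter measures $\|D\one_{E_t}\|(\Omega)$.

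\textbf{Necessity.} I would plug the test function $u=\one_E$ into \eqref{inf-l1-rough}, where $E\subset\Omega$ is an arbitrary Caccioppoli set. By Remark~\ref{rough-bdry}, on $\partial^*\Omega$ the rough trace $\one_E^*$ coincides with $\one_{\partial^*\Omega\cap\partial^*E}$, so $c\mapsto\int_{\partial\Omega}|\one_E^*-c|\,\mathrm{d}\mathcal{S}^h$ is convex and piecewise linear, attaining its infimum at $c\in\{0,1\}$ with value $\min\{\mathcal{S}^h(\partial^*\Omega\cap\partial^*E),\,\mathcal{S}^h(\partial^*\Omega\setminus\partial^*E)\}$. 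I then convert this $\mathcal{S}^h$-inequality into the perimeter form \eqref{min-per} via the two-sided equivalence of Remark~\ref{per-bound}, noting that $\partial^*F\subset\overline\Omega$ for any $F\subset\Omega$ forces $\|D\one_F\|(\Omega^c)=\|D\one_F\|(\partial\Omega)$ to be concentrated on $\partial^*F\cap\partial^*\Omega$. To link $\mathcal{S}^h(\partial^*\Omega\setminus\partial^*E)$ with $\|D\one_{\Omega\setminus E}\|(\Omega^c)$, I use the inclusion $\partial^*\Omega\setminus\partial^*E\subset\partial^*(\Omega\setminus E)$, which follows from the density identity $\Theta(\Omega\setminus E,x)=\Theta(\Omega,x)-\Theta(E,x)$ at points of $E^{(0)}$ together with $E^{(1)}\cap\partial^*\Omega=\emptyset$ (a consequence of $E\subset\Omega$); testing also with $u=\one_{\Omega\setminus E}$ yields the symmetric control and closes the argument.

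\textbf{Sufficiency.} Starting from the layer-cake expansion
\[
\int_{\partial\Omega}|u^*-c|\,\mathrm{d}\mathcal{S}^h=\int_c^\infty\mathcal{S}^h(\{u^*\ge t\})\,\mathrm{d}t+\int_{-\infty}^c\mathcal{S}^h(\{u^*<t\})\,\mathrm{d}t,
\]
I use Lemma~\ref{Lemma31} to identify the first integrand with $\mathcal{S}^h(\partial^*\Omega\cap\partial^*E_t)$ and the inclusion above to bound the second by $\mathcal{S}^h(\partial^*\Omega\cap\partial^*(\Omega\setminus E_t))$. Remark~\ref{per-bound} then converts these respectively into $\alpha^{-1}\|D\one_{E_t}\|(\Omega^c)$ and $\alpha^{-1}\|D\one_{\Omega\setminus E_t}\|(\Omega^c)$. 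I pick $c$ as a median level at which $\|D\one_{E_t}\|(\Omega^c)$ and $\|D\one_{\Omega\setminus E_t}\|(\Omega^c)$ swap dominance: this is available since $\|D\one_{E_t}\|(\Omega^c)\to 0$ as $t\to+\infty$ while tending to $\|D\one_\Omega\|(\Omega^c)$ as $t\to-\infty$, and symmetrically for the other. With this choice, for $t>c$ the minimum in \eqref{min-per} is realised by $\|D\one_{E_t}\|(\Omega^c)\le k\|D\one_{E_t}\|(\Omega)$, and for $t<c$ by $\|D\one_{\Omega\setminus E_t}\|(\Omega^c)\le k\|D\one_{E_t}\|(\Omega)$; here I use $\|D\one_{\Omega\setminus E_t}\|(\Omega)=\|D\one_{E_t}\|(\Omega)$. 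Integrating in $t$ and applying the coarea formula $\|Du\|(\Omega)=\int_\R\|D\one_{E_t}\|(\Omega)\,\mathrm{d}t$ closes the bound with constant of order $k/\alpha$.

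\textbf{Main obstacle.} The delicate step is the selection of the median $c$: the function $t\mapsto\|D\one_{E_t}\|(\Omega^c)-\|D\one_{\Omega\setminus E_t}\|(\Omega^c)$ is not \emph{a priori} monotone, so a measurable-selection argument or a finer decomposition of $\R$ may be required to ensure that the piecewise split induced by $c$ lines up correctly with the case analysis prescribed by \eqref{min-per}. A secondary technical point in the necessity direction is the control of the set $\partial^*\Omega\cap\partial^*E\cap\partial^*(\Omega\setminus E)$, where both $E$ and $\Omega\setminus E$ retain positive density on $\partial^*\Omega$; testing \eqref{inf-l1-rough} symmetrically with $\one_E$ and $\one_{\Omega\setminus E}$ circumvents this by producing two complementary estimates that together recover \eqref{min-per}.
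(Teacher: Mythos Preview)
Your plan matches the paper's proof in structure: test with $\one_E$ for necessity, layer-cake plus Lemma~\ref{Lemma31} plus coarea for sufficiency. The one point worth flagging is that your ``main obstacle''---the possible non-monotonicity of $t\mapsto\|D\one_{E_t}\|(\Omega^c)-\|D\one_{\Omega\setminus E_t}\|(\Omega^c)$---is an artefact of choosing the median at the perimeter-measure level rather than at the $\mathcal{S}^h$ level. The paper makes the selection on the $\mathcal{S}^h$ side, where monotonicity is free: from the nesting $E_t\subset E_\tau\subset\Omega$ for $\tau<t$ one checks directly that $\partial^*\Omega\cap\partial^*E_t\subset\partial^*\Omega\cap\partial^*E_\tau$ (positive upper density of $E_t$ forces positive upper density of $E_\tau$, while positive upper density of $\Omega^c$ forces positive upper density of $E_\tau^c$), so $t\mapsto\mathcal{S}^h(\partial^*\Omega\cap\partial^*E_t)$ is non-increasing and $t\mapsto\mathcal{S}^h(\partial^*\Omega\setminus\partial^*E_t)$ is non-decreasing. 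Hence
\[
t_0\coloneqq\sup\bigl\{t:\ \mathcal{S}^h(\partial^*\Omega\cap\partial^*E_t)\ge\mathcal{S}^h(\partial^*\Omega\setminus\partial^*E_t)\bigr\}
\]
cleanly splits $\R$, and one applies \eqref{min-per} together with Remark~\ref{per-bound} \emph{after} this split rather than before. With that reordering your sufficiency argument goes through without any measurable-selection machinery; the necessity argument is exactly as in the paper.
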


\begin{proof}
{\bf Necessity.} Let $E\subset\Omega$ be
such that ${\displaystyle \left\Vert D\mathds1_{E}\right\Vert (\Omega)}<\infty$,
and apply Remark \ref{per-global} 
to infer that $\|D\one_E\|(\X)<\infty$.
Then, since $\mathcal{S}^h (\partial\Omega\backslash\partial^*\Omega)=0$, by Remark \ref{rough-bdry} we get
\begin{equation*}
\underset{c\in\mathbb{R}}{\inf}\int_{\partial^{*}\Omega}\left|\mathds1_{E}^{*}(x)-c\right|\mathrm{d}{\cal S}^{h}(x) =\underset{c\in\mathbb{R}}{\min}\left\{ |1-c|{\cal S}^{h}\!\left(\partial^{*}E\cap\partial^{*}\Omega\right)+|c|{\cal S}^{h}\!\left(\partial^{*}\Omega\backslash\partial^{*}E\right)\right\}.
\end{equation*}
Now observe that the function
\begin{equation*}
 \Phi(c)\coloneqq |1-c|\mathcal{S}^h(\partial^*E\cap\partial^*\Omega)+|c|\mathcal{S}^h(\partial^*\Omega\backslash\partial^*E),
\end{equation*}
$c\in\mathbb{R}$, clearly attains its minima when $c=0$ and $c=1$ respectively, so we actually have
\begin{align*}
\underset{c\in\R}{\inf}\int_{\partial^*\Omega}\left|\one_E^*(x)-c\right|\text{d} \mathcal{S}^h(x) & =\mathrm{min} \left\{ \mathcal{S}^h(\partial^{*}\Omega\cap\partial^*E), \mathcal{S}^h(\partial^{*}\Omega\backslash\partial^{*}E)\right\} \\ 
&\ge \frac{1}{c_D}\,\mathrm{min}\left\{\|D\one_E\|(\Omega^c),\|D\one_{\Omega\backslash E}\|(\Omega^c)\right\}
\end{align*}
by Remark \ref{per-bound}.

Since by hypothesis
\[
{\displaystyle \underset{c\in\mathbb{R}}{\inf}\int_{\partial\Omega}\left|\mathds1_{E}^{*}(x)-c\right|\mathrm{d}{\cal S}^{h}(x)\le k_1\left\Vert D\one_E\right\Vert (\Omega),}
\]
we then obtain our claim.

\medskip

{\bf Sufficiency.}  Let $u\in BV(\Omega)$; then for every $t$, ${\cal S}^{h}\left(\partial\Omega\cap\partial^{*}E_{t}\right)$ is a non-increasing function of $t$. In fact, if ${\displaystyle x\in\partial^{*}\Omega\cap\partial^{*}E_{t}}$ and $\tau<t$, then ${\displaystyle \Omega\supset E_{\tau}\supset E_{t}}$ and the same holds as well for the essential boundaries; moreover,
\[
\Theta^*(E_t,x)\le\Theta^*(E_\tau,x)\le\Theta^*(\Omega,x).
\]
This means, by hypothesis and by the definition of essential boundary \eqref{dstar}, that ${\displaystyle x\in\partial^{*}\Omega\cap\partial^{*}E_{\tau}}$. In a similar manner we can show that ${\displaystyle {\cal S}^{h}\left(\partial\Omega\backslash\partial^{*}E_{t}\right)}$ is a non-decreasing function of $t$. By the Coarea Formula,  Remark \ref{per-bound} and \eqref{min-per}, there holds
\begin{align*}
k_2\left\Vert Du\right\Vert (\Omega)=&
k_2\int_{\mathbb{R}}\left\Vert D\mathds1_{E_{t}}\right\Vert (\Omega)\text{d}t\\
\ge& 
\alpha\int_{\mathbb{R}}\min\left\{ {\cal S}^{h}\left(\partial\Omega\cap\partial^{*}E_{t}\right),{\cal S}^{h}\left(\partial\Omega\backslash\partial^{*}E_{t}\right)\right\} \text{d}t.
\end{align*}
If we now set
\[\displaystyle t_{0}\coloneqq\sup\left\{ t:\:\left\Vert D\mathds1_{E_{t}}\right\Vert ({\X})<\infty,\:{\cal S}^{h}\left(\partial\Omega\cap\partial^{*}E_{t}\right)\geq{\cal S}^{h}\left(\partial\Omega\backslash\partial^{*}E_{t}\right)\right\},
\]
then we get, by recalling Lemma \ref{Lemma31},
\begin{align*}
k_2\left\Vert Du\right\Vert (\Omega) & \ge \alpha\left(\int_{t_{0}}^{+\infty}{\cal S}^{h}\left(\partial\Omega\cap\partial^{*}E_{t}\right)\text{d}t+\int_{-\infty}^{t_{0}}{\cal S}^{h}\left(\partial\Omega\backslash\partial^{*}E_{t}\right)\text{d}t\right) \\
 & =\alpha\bigg(\int_{t_{0}}^{+\infty}{\cal S}^{h}\left(\left\{ x\in\partial\Omega:u^{*}(x)\ge t\right\} \right)\text{d}t\\&\mskip+170mu+\int_{-\infty}^{t_{0}}{\cal S}^{h}\left(\left\{ x\in\partial\Omega:u^{*}(x)\le t\right\} \right)\text{d}t\bigg) \\
{\displaystyle } & =\alpha\left(\int_{\partial\Omega}\left[u^{*}(x)-t_{0}\right]^{+}\mathrm{d}{\cal S}^{h}(x)+\int_{\partial\Omega}\left[u^{*}(x)-t_{0}\right]^{-}\mathrm{d}{\cal S}^{h}(x)\right)\\
&=\alpha\int_{\partial\Omega}\left|u^{*}(x)-t_{0}\right|\mathrm{d}{\cal S}^{h}(x).
\end{align*}
In other words,
\[
\frac{k_2}{\alpha}\left\Vert Du\right\Vert (\Omega)=k'\|Du\|(\Omega)\ge\underset{c\in\mathbb{R}}{\inf}\int_{\partial\Omega}\left|u^{*}(x)-c\right|\mathrm{d}{\cal S}^{h}(x).
\]
\end{proof}

\smallskip

\begin{definition}\label{zeta-mazya} 
Let ${\displaystyle A\subset\overline{\Omega}}$. We shall denote by ${\displaystyle 0<\zeta_{A}^{(\alpha)}<\infty}$ the infimum of those $k>0$ such that ${\displaystyle \left[\left\Vert D\mathds1_{E}\right\Vert \left(\Omega^{c}\right)\right]^{\alpha}\le k\left\Vert D\mathds1_{E}\right\Vert (\Omega)}$ for all sets $E\subset\Omega$ which satisfy  the condition $\mu\left(E\cap A\right)+{\displaystyle \mathcal{S}^{h}\left(A\cap\partial^{*}E\right)}=0$.
\end{definition}

\smallskip

\begin{thm}\label{rough-zeta}
Let $\left\Vert D\mathds1_{\Omega}\right\Vert ({\X})<\infty$ and assume ${\cal S}^{h}\left(\partial\Omega\backslash\partial^{*}\Omega\right)=0$. Then, if $A\subset\overline{\Omega}$, for every $u\in 
BV(\Omega)$ such that ${\displaystyle u|
_{A\cap\Omega}=0}$ and ${\displaystyle u^{*}|_{A\cap\partial^{*}\Omega}=0}$, there is a constant $c>0$, depending on $\zeta^{(1)}_A$ and on $c_D$, such that
\[
{\displaystyle \int_{\partial\Omega}\left|u^{*}(x)\right|\mathrm{d}{\cal S}^{h}(x)\le c\left\Vert Du\right\Vert (\Omega)}.
\]
\end{thm}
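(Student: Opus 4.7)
The strategy parallels the sufficiency argument in the preceding theorem, with the single-sided estimate furnished by $\zeta_A^{(1)}$ replacing the symmetric minimum. First, I would reduce to controlling the positive part of $u^{*}$. Writing $|u^{*}|=(u^{*})^{+}+(u^{*})^{-}$ and exploiting the proposition right after Lemma \ref{Lemma31} (so that $(u^{*})^{-}=((-u)^{*})^{+}$), it suffices to bound $\int_{\partial\Omega}(u^{*})^{+}\,\mathrm{d}\mathcal{S}^{h}$, since $-u$ satisfies both hypotheses $(-u)|_{A\cap\Omega}=0$ and $(-u)^{*}|_{A\cap\partial^{*}\Omega}=0$. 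By the layer-cake formula,
\[
\int_{\partial\Omega}(u^{*})^{+}\,\mathrm{d}\mathcal{S}^{h}=\int_{0}^{+\infty}\mathcal{S}^{h}\bigl(\{x\in\partial\Omega:u^{*}(x)\ge t\}\bigr)\,\mathrm{d}t,
\]
and the assumption $\mathcal{S}^{h}(\partial\Omega\setminus\partial^{*}\Omega)=0$ combined with Lemma \ref{Lemma31} identifies the integrand with $\mathcal{S}^{h}(\partial^{*}\Omega\cap\partial^{*}E_{t})$ for almost every $t$, where $E_{t}=\{u\ge t\}$.

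Next, I would verify that for a.e.\ $t>0$ the set $E_{t}$ is admissible in the sense of Definition \ref{zeta-mazya}, i.e.\ $\mu(E_{t}\cap A)+\mathcal{S}^{h}(A\cap\partial^{*}E_{t})=0$. Since $E_{t}\subseteq\Omega$ and $u\equiv 0$ on $A\cap\Omega$, the first summand vanishes. For the second, I would split $A\cap\partial^{*}E_{t}$ into its contributions inside $\Omega$ and on $\partial\Omega$. Modulo $\mathcal{S}^{h}$-null sets the boundary contribution is $A\cap\partial^{*}\Omega\cap\partial^{*}E_{t}$, which is empty: any point $x$ therein would force $u^{*}(x)\ge t>0$ by the very definition \eqref{rgh-tr} of the rough trace, contradicting $u^{*}|_{A\cap\partial^{*}\Omega}=0$. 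The interior contribution $A\cap\Omega\cap\partial^{*}E_{t}$ is $\mathcal{S}^{h}$-negligible thanks to $u\equiv 0$ on $A\cap\Omega$, since by Theorem \ref{locality} an $\mathcal{S}^{h}$-a.e.\ point of $\partial^{*}E_{t}$ lies in $\Sigma_{\gamma}$ and thus has both $E_{t}$ and its complement of positive upper $\mu$-density, forcing $u$ to take values $\ge t>0$ on a set of positive density near such a point.

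With admissibility in hand, the lower bound in Remark \ref{per-bound} applied to $B=\partial^{*}\Omega\subseteq\Omega^{c}$ (the inclusion uses that $\Omega$ is open), together with Definition \ref{zeta-mazya}, yields
\[
\mathcal{S}^{h}(\partial^{*}\Omega\cap\partial^{*}E_{t})\le\frac{1}{\alpha}\|D\one_{E_{t}}\|(\Omega^{c})\le\frac{\zeta_{A}^{(1)}}{\alpha}\|D\one_{E_{t}}\|(\Omega).
\]
Integrating over $t\in(0,\infty)$ and invoking the Coarea Formula produces $\int_{\partial\Omega}(u^{*})^{+}\,\mathrm{d}\mathcal{S}^{h}\le (\zeta_{A}^{(1)}/\alpha)\,\|Du\|(\Omega)$. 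Repeating the argument for $(u^{*})^{-}$ via $-u$ gives the same bound, and the conclusion follows with $c=2\zeta_{A}^{(1)}/\alpha$, which depends on $\zeta_{A}^{(1)}$ and on the structural constants of the space (the doubling constant $c_{D}$ entering through $\alpha$ in Remark \ref{per-bound}).

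The main obstacle is the admissibility check in the second paragraph, specifically the treatment of the interior contribution $A\cap\Omega\cap\partial^{*}E_{t}$: while the boundary part is dispatched cleanly by the definition of the rough trace, the inner part needs a density argument combining the pointwise vanishing of $u$ on $A\cap\Omega$ with the characterization $\partial^{*}E_{t}\subseteq\Sigma_{\gamma}$ from Theorem \ref{locality}. Everything else is a rerun of the Coarea-plus-isoperimetric machinery already used in the previous theorem.
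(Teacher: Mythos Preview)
Your approach is essentially the paper's: layer-cake representation, Lemma~\ref{Lemma31}, the lower bound from Remark~\ref{per-bound}, the defining inequality of $\zeta_A^{(1)}$, and Coarea. The only organizational difference is that the paper treats both signs of $u^{*}$ in a single pass---writing $\int_{\partial\Omega}|u^{*}|\,\mathrm{d}\mathcal{S}^{h}=\int_{0}^{\infty}\mathcal{S}^{h}(\{u^{*}\ge t\})\,\mathrm{d}t+\int_{0}^{\infty}\mathcal{S}^{h}(\{-u^{*}\ge t\})\,\mathrm{d}t$ and using $\Omega\setminus E_{t}$ for the negative range---so that the two Coarea integrals recombine to exactly $\|Du\|(\Omega)$, giving $c=\zeta_A^{(1)}/\alpha$; your reduction to $\pm u$ is equivalent but, as written, bounds each half separately by the full $\|Du\|(\Omega)$ and so picks up the harmless factor~$2$.

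On the admissibility check, the paper is in fact terser than you: it simply asserts that ``by our hypotheses, we get $\mu(A\cap E_{t})+\mathcal{S}^{h}(A\cap\partial^{*}E_{t})=0$ for almost every $t$'' and moves on. Your boundary argument via the definition of $u^{*}$ is exactly what is needed there. Your interior density argument, however, does not quite close as stated: knowing that $E_{t}$ has positive upper $\mu$-density at $x\in\Sigma_{\gamma}$ only says $\{u\ge t\}$ is thick near $x$, which does not by itself contradict $u\equiv 0$ on $A\cap\Omega$ unless $A\cap\Omega$ also has full (or at least positive) density at $x$---a hypothesis that is not assumed. The paper does not address this point either, so your proof is at least as complete as the original; but the obstacle you flag is genuine and is not resolved by the $\Sigma_{\gamma}$ argument alone.
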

\begin{proof}
By Cavalieri's Principle,

\smallskip

$\displaystyle{
\int_{\partial\Omega}\left|u^{*}(x)\right|
\mathrm{d}{\cal S}^{h}(x)}$
\begin{equation*}=\int_{0}^{+\infty}
\left[{\cal S}^{h}\left(\left\{ x\in\partial\Omega:u^{*}(x)\ge t\right\} 
\right)+{\cal S}^{h}\left(\left\{ x\in\partial\Omega:-u^{*}(x)\ge t
\right\} \right)\right]\text{d}t.
\end{equation*}

Notice that, by Lemma \ref{Lemma31} and Remark \ref{per-bound},
\begin{align*}
\int_{0}^{+\infty}{\cal S}^{h}\left(\{x\in\partial\Omega:u^*(x)\ge t\}\right)
\text{d}t & = \int_0^{+\infty}\mathcal{S}^h(\partial^*\Omega\cap\partial^*E_t)\text{d}t \\ &\le \frac{1}{\alpha}\int_{0}^{+\infty}\left\Vert D\mathds1_{E_{t}}
\right\Vert \left(\Omega^{c}\right)\text{d}t \\ & \le \frac{\zeta^{(1)}_A}{\alpha}\int_0^{+\infty}\|D\one_{E_t}\|(\Omega)\text{d}t,
\end{align*}
where we used the definition of $\zeta^{(1)}_A$ and the fact that, by our hypotheses, we get ${\displaystyle \mu\left(A\cap 
E_{t}\right)+\mathcal{S}^{h}\left(A\cap\partial^{*}E_{t}
\right)}=0$
for almost every $t>0$.

Similarly, we find
\begin{align*}
\int_{0}^{+\infty}{\cal S}^{h}\left(
\left\{ 
x\in\partial\Omega:-u^{*}(x)\ge t\right\} \right)\text{d}t
&\le
\frac{1}{\alpha}\int_{-\infty}^{0}
\left\Vert D\mathds1_{\Omega\backslash E_{t}}
\right\Vert \left(\Omega^{c}\right)\text{d}t\\
&\le
\frac{\zeta_{A}
^{(1)}}{\alpha}\int_{-\infty}^{0}\left\Vert D\mathds1_{E_{t}}
\right\Vert (\Omega)\text{d}t.
\end{align*}
Therefore, letting $c\coloneqq \dfrac{\zeta^{(1)}_A}{\alpha}$ gives the assertion.

\end{proof}

\smallskip

\begin{remark}
 In particular, if in Theorem \ref{rough-zeta} we substitute $u$ with $\mathds1_{E}\in BV(\Omega)$, $E\subset\Omega$, by Remark \ref{rough-bdry} we would simply have
\begin{align*}
 \|\one_E^*(x)\|_{L^1(\partial\Omega,\mathcal{S}^h)}& =\int_{\partial\Omega}|\one_E^*(x)|\text{d}\mathcal{S}^h(x) \\ & =\mathcal{S}^h(\partial^*\Omega\cap\partial^*E)\le \frac{1}{\alpha}\|D\one_E\|(\Omega^c) \\ & \le \frac{\zeta^{(1)}_A}{\alpha}\|D\one_E\|(\Omega)=c\|D\one_E\|(\Omega),
\end{align*}
where we explicitly used the assumption $\mathcal{S}^h(\partial\Omega\backslash\partial^*\Omega)=0$.
\end{remark}


The most important result of the present section is the following metric version of \cite[Theorem 9.5.4]{Ma}.


\begin{thm}\label{mazyathm}
Let $\Omega\subset\X$ be a bounded open set such that $\left\Vert D\mathds1_{\Omega}\right\Vert ({\X})<\infty$
and assume ${\cal S}^{h}\left(\partial\Omega\backslash\partial^{*}\Omega\right)=0$.
Then, every $u\in BV(\Omega)$ satisfies
\[
\left\Vert u^{*}\right\Vert _{L^{1}(\partial\Omega
,\mathcal{S}^h)}\le c\left\Vert u\right\Vert _{BV(\Omega)}
\]
with a constant $c>0$ independent of $u$, 
if and only if there exists $\delta>0$ such that for
every $E\subset\Omega$ with $\mathrm{diam}(E)\le\delta$ and with $\left\Vert D\mathds1_{E}
\right\Vert (\Omega)<\infty$
there holds 
\begin{equation}\label{per-compl}
\left\Vert D\mathds1_{E}\right\Vert \left(\Omega^{c}
\right)\le c'\left\Vert D\mathds1_{\text{E}}\right\Vert 
(\Omega)
\end{equation}
for some constant $c'>0$ independent of $E$.
\end{thm}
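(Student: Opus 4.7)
The plan is to prove the two implications separately, in each case reducing everything to characteristic functions via the Coarea Formula and Lemma~\ref{Lemma31}, and interchanging freely the spherical Hausdorff measure with the perimeter measure on $\partial\Omega$ through Remark~\ref{per-bound} and the hypothesis $\mathcal{S}^h(\partial\Omega\setminus\partial^*\Omega)=0$.

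For \emph{necessity}, I would fix $E\subset\Omega$ with $\mathrm{diam}(E)\le\delta$ and $\|D\one_E\|(\Omega)<\infty$, and apply the trace inequality to $u=\one_E$. Remark~\ref{rough-bdry} gives $\|\one_E^*\|_{L^1(\partial\Omega,\mathcal{S}^h)}=\mathcal{S}^h(\partial^*\Omega\cap\partial^*E)$; since $E\subset\Omega$ forces $\partial^*E\cap\Omega^c\subset\partial\Omega$, Remark~\ref{per-bound} then yields $\|D\one_E\|(\Omega^c)\le c_D\mathcal{S}^h(\partial^*E\cap\partial^*\Omega)$, and combining produces $\|D\one_E\|(\Omega^c)\le c\,c_D\bigl(\mu(E)+\|D\one_E\|(\Omega)\bigr)$. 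To absorb $\mu(E)$, I would choose a ball $B_\rho(x_0)\supset E$ with $\rho\simeq\delta$ and $\mu(B_\rho(x_0))\ge 2\mu(E)$ (available by doubling, since $\Omega$ is bounded) and apply the $BV$ Poincar\'e inequality \eqref{poincarebv} to $\one_E$; a direct computation of its left-hand side gives $\mu(E)\le C\delta\|D\one_E\|(\X)$. Splitting $\|D\one_E\|(\X)=\|D\one_E\|(\Omega)+\|D\one_E\|(\Omega^c)$ and taking $\delta$ small enough so that $c\,c_D\,C\delta<1/2$ closes the absorption on the left and yields \eqref{per-compl}.

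For \emph{sufficiency}, since $\Omega$ is bounded I would cover $\partial\Omega$ by finitely many balls $\{B_{r_i}(x_i)\}_{i=1}^N$ of radius $\le\delta/4$ having bounded overlap. The layer-cake formula, Lemma~\ref{Lemma31} and the antisymmetry $-u^*=(-u)^*$ reduce the bound $\|u^*\|_{L^1(\partial\Omega,\mathcal{S}^h)}\le c\|u\|_{BV(\Omega)}$ to controlling quantities of the form $\int_\R\mathcal{S}^h(\partial^*\Omega\cap\partial^*E_t)\,\d t$. For a.e. $t$ I would split
$$\mathcal{S}^h\!\left(\partial^*\Omega\cap\partial^*E_t\right)\le\sum_{i=1}^N\mathcal{S}^h\!\left(\partial^*\Omega\cap\partial^*E_t\cap B_{r_i}(x_i)\right),$$
and for each $i$ select $\rho_i\in[2r_i,3r_i]$ with $\|D\one_{E_t}\|(\partial B_{\rho_i}(x_i))=0$ and $m_{E_t}(x_i,\cdot)$ differentiable at $\rho_i$, so that $F_{i,t}:=E_t\cap B_{\rho_i}(x_i)$ has diameter at most $\delta$. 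The hypothesis \eqref{per-compl} applied to $F_{i,t}$, combined with the localization estimate \eqref{localization}
$$\|D\one_{F_{i,t}}\|(\Omega)\le m'_{E_t}(x_i,\rho_i)+\|D\one_{E_t}\|\!\left(\overline{B}_{\rho_i}(x_i)\cap\Omega\right),$$
and Remark~\ref{per-bound} on the left-hand side, produces a per-ball bound. Summing over $i$ (the bounded overlap survives an enlargement by a doubling factor) and integrating in $t$, the Coarea Formula converts $\int_\R\|D\one_{E_t}\|(\Omega)\,\d t$ into $\|Du\|(\Omega)$, while the remainders $m'_{E_t}(x_i,\rho_i)$ are averaged out over $\rho_i\in[2r_i,3r_i]$ by Fubini and bounded by a multiple of $\|u\|_{L^1(\Omega)}$ through $\int_{2r_i}^{3r_i}m'_{E_t}(x_i,\rho)\,\d\rho\le\mu(E_t\cap B_{3r_i}(x_i))$ and $\int_\R\mu(E_t\cap B)\,\d t\le C\|u\|_{L^1(B)}$.

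The main obstacle is the sufficiency direction, specifically the extra perimeter term $m'_{E_t}(x_i,\rho_i)$ generated by cutting $E_t$ against a ball in \eqref{localization}: this remainder is not controllable pointwise in $\rho_i$, only after averaging over a thin range of radii, and that average must be performed compatibly with the Coarea integration in $t$ and the bounded overlap of the covering of $\partial\Omega$. Necessity, by contrast, is essentially a one-line test-function computation once \eqref{poincarebv} is invoked to dominate $\mu(E)$ by $\delta\|D\one_E\|(\X)$ on small-diameter sets.
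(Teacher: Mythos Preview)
Your proposal follows the paper's proof closely in both directions. Two small remarks.

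In the necessity argument, the claim that $\mu(B_\rho(x_0))\ge 2\mu(E)$ is ``available by doubling'' is not quite right: doubling bounds $\mu(B_{2\rho})$ from \emph{above} by $c_D\mu(B_\rho)$ and gives no lower bound on the growth of balls, so it cannot by itself force $\mu(B_\rho)\ge 2\mu(B_\delta)$. The paper handles this step differently: it fixes a small $\rho$ first and then uses non-atomicity of $\mu$ together with compactness of $\overline{\Omega}$ to produce a uniform $\delta<\rho$ with $\mu(B_\rho(x_0)\setminus B_\delta(x_0))\ge\mu(B_\delta(x_0))$ for every $x_0\in\overline{\Omega}$. This is the only place non-atomicity is used, and it is essential.

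In the sufficiency argument, the paper avoids your Fubini/averaging step entirely: for each fixed $t$ it selects, via a Chebyshev-type argument, a radius $r_i\in(r,2r)$ (depending on $t$) at which $m'_{E_t}(x_i,r_i)\le 2m_{E_t}(x_i,2r)/r$, thereby obtaining directly the pointwise-in-$t$ bound
\[
\mathcal{S}^h(\partial^*E_t\cap\partial^*\Omega)\le C\bigl(\mu(E_t\cap\Omega)+\|D\one_{E_t}\|(\Omega)\bigr)
\]
before integrating in $t$ via Coarea. Your averaging over $\rho_i$ also works---since the left-hand side of the per-ball estimate is independent of $\rho_i$, one may simply average the right-hand side over $\rho_i\in[2r_i,3r_i]$ for each fixed $t$, and no genuine Fubini interchange with the $t$-integration is needed. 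The cover of $\partial\Omega$ versus the paper's cover of all of $\X$ is an inessential difference.
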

\begin{proof}
{\bf Necessity.} We start by recalling that by Remark \ref{rough-bdry}, one has $\one_E^*(x)=1$ on $\partial^*\Omega\cap\partial^*E$ and $\one_E^*(x)=0$ on $\partial^*\Omega\backslash\partial^*E$. Therefore,
\[
\left\Vert \mathds1_{E}^{*}\right\Vert _{L^{1}(\partial\Omega)}=\int_{\partial\Omega}\mathds1_{E}^{*}(x)\mathrm{d}{\cal S}^{h}(x)={\cal S}^{h}\left(\partial^{*}\Omega\cap\partial^{*}E\right),
\]
since by hypothesis ${\cal S}^{h}\left(\partial\Omega\backslash\partial^{*}\Omega\right)=0$.

\medskip

Now, let $\rho>0$ to be fixed in the sequel. We have the following

\bigskip

\begin{adjustwidth}{0.75cm}{0.75cm}
\textit{\underline{Claim.}} \textit{There exists $0<\delta<\rho$ such that,  for any $x_0\in \overline{\Omega}$,}
\[
\mu(B_\rho(x_0)\backslash B_\delta(x_0))
\ge \mu(B_\delta(x_0)).
\]
Assume  by contradiction that for any $\delta>0$
there exists $x_\delta\in  \overline{\Omega}$ such that
\[
\mu(B_\rho(x_\delta)\backslash B_\delta(x_\delta))
< \mu(B_\delta(x_\delta)).
\]
By taking $\delta=1/j$, $j\in\mathbb{N}$, we construct a  sequence
$(x_j)_{j\in\mathbb{N}}\subset\overline{\Omega}$ such that
\[
\mu(B_\rho(x_j)\backslash B_{1/j}(x_j))
< \mu(B_{1/j}(x_j)).
\]
By the compactness of $\overline{\Omega}$, up to subsequences we may assume $x_j\to x_0$. If we set  
\[
\varepsilon_j\coloneqq d(x_j,x_0),
\]
by the inclusions $B_{\rho-\varepsilon_j}(x_0)\backslash B_{1/j+\varepsilon_j}(x_0)\subset B_\rho(x_j)\backslash B_{1/j}(x_j)$ and $B_{1/j}(x_j)\subset B_{1/j+\varepsilon_j}(x_0)$ we would find that
\[
\mu(B_{\rho-\varepsilon_j}(x_0)\backslash
B_{1/j+\varepsilon_j}(x_0))< \mu(B_{1/j+\varepsilon_j}(x_0)).
\]
Passing to the limit as $j\to +\infty$, we obtain 
\[
\mu(B_\rho(x_0)\backslash\{x_0\})<\mu(\{x_0\}). 
\]
Since $\mu$ is  non-atomic, we  get a contradiction. The claim follows.
\end{adjustwidth}

\bigskip

Let now $E\subset \Omega\cap B_\delta(x_0)$ be a  set with finite perimeter; then
\begin{align}\label{min-muE}
\begin{split}
\mu(E) & = \mu(E\cap B_\delta(x_0))\\
&=\mu(E\cap B_\rho(x_0))
=\min\{ \mu(E\cap B_\rho(x_0)),\mu(B_\rho(x_0)\backslash E)\}.
\end{split}
\end{align}

If we consider the estimate 
\[
\| u^*\|_{L^1(\partial\Omega)}
\leq  c\| u\|_{BV(\Omega)}
\]
with $u=\mathds1_E$  we get, by the definition of the $BV$-norm,
\begin{equation}\label{rough-norm-est}
\|\mathds1_E^*\|_{L^1(\partial\Omega)}
\leq c \left(
\mu(E)+\|D\mathds1_E\|(\Omega)\right).
\end{equation}
Recall that under our hypotheses, $E$ has finite perimeter
also in $\X$ by Remark \ref{per-global}; therefore,
we also have the estimate
\begin{equation}\label{per-compl-est}
\|D\mathds1_E\|(\Omega^c)=
\|D\mathds1_E\|(\partial \Omega)
\leq c_D \mathcal{S}^h(\partial^*\Omega\cap \partial^*E)
=c_D \|\mathds1_E^*\|_{L^1(\partial\Omega)}.
\end{equation}
Applying the Poincar\'e inequality for $BV$ functions, we obtain
\begin{align}\label{poincare-BV}\begin{split}
\int_{B_{\rho}\left(x_{0}\right)}\left|\mathds1_{E}-\left(\mathds1_{E}\right)_{B_{\rho}\left(x_{0}\right)}\right|\text{d}\mu=&
\int_{B_{\rho}\left(x_{0}\right)}\left|\mathds1_{E}-\frac{\mu\left(E\cap B_{\rho}\left(x_{0}\right)\right)}{\mu\left(B_{\rho}\left(x_{0}\right)\right)}\right|\text{d}\mu \\
&\le c\rho\left\Vert D\mathds1_{E}\right\Vert \left(B_{\lambda\rho}\left(x_{0}\right)\right).\end{split}
\end{align}
Since $B_{\rho}\left(x_{0}\right)=E\cup\left(B_{\rho}\left(x_{0}\right)\backslash E\right)$,
computing the integral in \eqref{poincare-BV} gives

\medskip{}

${\displaystyle \mu\left(E\cap B_{\rho}\left(x_{0}\right)\right)\left(1-\frac{\mu\left(E\cap B_{\rho}\left(x_{0}\right)\right)}{\mu\left(B_{\rho}\left(x_{0}\right)\right)}\right)+\frac{\mu\left(E\cap B_{\rho}\left(x_{0}\right)\right)}{\mu\left(B_{\rho}\left(x_{0}\right)\right)}\mu\left(B_{\rho}\left(x_{0}\right)\backslash E\right)=}$
\begin{align*}
 & =\frac{\mu\left(E\cap B_{\rho}\left(x_{0}\right)\right)}{\mu\left(B_{\rho}\left(x_{0}\right)\right)}\mu\left(B_{\rho}\left(x_{0}\right)\backslash E\right)+\frac{\mu\left(E\cap B_{\rho}\left(x_{0}\right)\right)}{\mu\left(B_{\rho}\left(x_{0}\right)\right)}\mu\left(B_{\rho}\left(x_{0}\right)\backslash E\right)\\
 & =2{\displaystyle \frac{\mu\left(E\cap B_{\rho}\left(x_{0}\right)\right)}{\mu\left(B_{\rho}\left(x_{0}\right)\right)}\mu\left(B_{\rho}\left(x_{0}\right)\backslash E\right)}\\
 & =2\mu\left(E\cap B_{\rho}\left(x_{0}\right)\right)\left(1-\frac{\mu\left(B_{\rho}\left(x_{0}\right)\cap E\right)}{\mu\left(B_{\rho}\left(x_{0}\right)\right)}\right).
\end{align*}
As $\mu\left(B_{\delta}\left(x_{0}\right)\right)\le\mu\left(B_{\rho}\left(x_{0}\right)\backslash B_{\delta}\left(x_{0}\right)\right)$, by \eqref{min-muE} and
again by the Poincar\'e inequality we get
\begin{align*}
\mu\left(E\cap B_{\rho}\left(x_{0}\right)\right) & \le2\mu\left(E\cap B_{\rho}\left(x_{0}\right)\right)\left(1-\frac{\mu\left(B_{\rho}\left(x_{0}\right)\cap E\right)}{\mu\left(B_{\rho}\left(x_{0}\right)\right)}\right)\\
 & \le c\rho\left\Vert D\mathds1_{E}\right\Vert \left(B_{\lambda\rho}\left(x_{0}\right)\right)\\
 & =c\rho\left\Vert D\mathds1_{E}\right\Vert ({\X})\\
 & =c\rho\left(\left\Vert D\mathds1_{E}\right\Vert (\Omega)+\left\Vert D\mathds1_{E}\right\Vert \left(\partial\Omega\right)\right)\\
 & =c\rho\left(\left\Vert D\mathds1_{E}\right\Vert (\Omega)+\left\Vert D\mathds1_{E}\right\Vert \left(\Omega^{c}\right)\right),
\end{align*}
which, by the estimate $\left\Vert D\mathds1_{E}\right\Vert \left(\Omega^{c}\right)\le c_D\left(\left\Vert D\mathds1_{E}\right\Vert \left(\Omega\right)+\mu(E)\right)$
previously found by combining \eqref{rough-norm-est} and \eqref{per-compl-est}, entails
\begin{align*}
\left\Vert D\mathds1_{E}\right\Vert \left(\Omega^{c}\right) & \le c_D\left(\left\Vert D\mathds1_{E}\right\Vert (\Omega)+\mu\left(E\cap B_{\rho}\left(x_{0}\right)\right)\right)\\
 & \le c_D\left(\left\Vert D\mathds1_{E}\right\Vert (\Omega)+c\rho\left\Vert D\mathds1_{E}\right\Vert (\Omega)+c\rho\left\Vert D\mathds1_{E}\right\Vert \left(\Omega^{c}\right)\right)\\
 & =c_D\left(1+c\rho\right)\left\Vert D\mathds1_{E}\right\Vert \left(\Omega\right)+c_D\cdot c\rho\left\Vert D\mathds1_{E}\right\Vert \left(\Omega^{c}\right),
\end{align*}
whence
\[
\left\Vert D\mathds1_{E}\right\Vert \left(\Omega^{c}\right)\le c_D\frac{1+c\rho}{1-c_D\cdot c\rho}\left\Vert D\mathds1_{E}\right\Vert (\Omega)=c'\left\Vert D\mathds1_{E}\right\Vert (\Omega),
\]
where we of course require $\rho<\frac{1}{c_D\cdot c}$.

\medskip

{\bf Sufficiency.} Assume that \eqref{per-compl} holds
for any finite perimeter set $E\subset\Omega$
with diameter less then $\delta$.
This in particular implies that, by Remark \ref{per-bound},
\begin{align*}
{\cal{S}}^h(\partial^*E\cap \partial^*\Omega)
\leq&\frac{1}{\alpha}\|D\one_E\|\left(\partial^*\Omega\right)
=\frac{1}{\alpha}\|D\one_E\|(\partial\Omega) \\
=&\frac{1}{\alpha}\|D\one_E\|\left(\Omega^c\right)
\leq \frac{c'}{\alpha}\|D\one_E\|(\Omega).
\end{align*}
Let us fix then $u\in BV(\X)$ and assume 
$u\geq 0$; by Lemma~\ref{Lemma31} and Cavalieri's Principle, we obtain that
\begin{align*}\label{firstEst}\begin{split}
\| u^*\|_{L^1(\partial\Omega,{\cal S}^h)} &
=\int_0^\infty {\cal S}^h(\{x\in \partial^*\Omega:u^*(x)\geq t\}) \mathrm{d}t \\
& =\int_0^\infty
{\cal S}^h(\partial^* E_t\cap \partial^*\Omega) \mathrm{d}t.\end{split}
\end{align*}
Take $t\in [0,\infty)$ such that $E_t$
has finite perimeter in $\Omega$ and set
$E=E_t$. We fix $r>0$ such that $2r<\delta$ and consider a covering of $\X$ made of balls of the type 
$B_r(x_i)$, $i\in I\subset \N$,
such that $B_{2r}(x_i)$ have
overlapping bounded by $c_o>0$. 
We also select $r_i\in (r,2r)$ 
such that $m_E(x,\cdot)$ is differentiable at
$r_i$ and
\[
m_E'(x,r_i) \leq \frac{2 m_E(x,2r)}{r}.
\]
This is possible since 
$\rho\mapsto m_E(x,\rho)$ is monotone non decreasing and
\[
\int_r^{2r} m'_E(x,\rho) \d\rho 
\leq m_E(x,2r)-m_E(x,r)
\leq m_E(x,2r),
\]
so that
\[
\left\vert\left\{ t\in (r,2r): m'_E(x,t)>2\frac{m_E(x,2r)}{r}\right\}\right\vert
<\frac{r}{2}.
\]
We shall denote $B_i:=B_{r_i}(x_i)$.
Notice that for any measurable set $E$, we have $\partial^*E \cap B_i
\subset \partial^*(E\cap B_i)$. Indeed, for any
$x\in E\cap B_i$, there exists $\rho_0>0$
such that $B_\rho(x)\subset B_i$ for any 
$\rho<\rho_0$, hence
\[
\frac{\mu(E\cap B_i\cap B_\rho(x))}{
\mu(B_\rho(x))} = 
\frac{\mu(E\cap B_\rho(x))}{
\mu(B_\rho(x))}.
\]
Therefore, we have
\[
{\cal S}^h(\partial^* E\cap \partial^*\Omega) 
\leq \sum_{i\in I}
{\cal S}^h(\partial^* E\cap \partial^*\Omega\cap B_i)
\leq
\sum_{i\in I}
{\cal S}^h(\partial^* (E\cap B_i) \cap 
\partial^*\Omega).
\]
From this, using \eqref{localization}, Remark \ref{per-bound} and the fact that $r_i<\delta$, 
by assumption,
\begin{align*}
\mathcal{S}^h(\partial^*E\cap \partial^*\Omega) &
\leq
\sum_{i\in I} \mathcal{S}^h
(\partial^*(E\cap B_i)\cap \partial^*\Omega)
\leq \frac{1}{\alpha} 
\sum_{i\in I}\|D\one_{E\cap B_i}\|\left(\partial^{*}\Omega\right)\\
& =
\frac{1}{\alpha} 
\sum_{i\in I}\|D\one_{E\cap B_i}\|\left(\Omega^c\right)
\leq
\frac{c'}{\alpha} 
\sum_{i\in I}\|D\one_{E\cap B_i}\|(\Omega)\\ &
\leq
\frac{c'}{\alpha}\sum_{i\in I}
\left(
m'_E(x_i,r_i)+\|D\one_E\|(\Omega\cap\overline{B}_{r_i})
\right)\\ &
\leq
\frac{c'}{\alpha} 
\sum_{i\in I} 
\left(
\frac{m_E(x_i,2r)}{r}+\|D\one_E\|(\Omega\cap B_{2r}(x_i))
\right) \\ &
\leq \frac{c'c_o}{\alpha} \left(
\mu(E\cap \Omega)+\|D\one_E\|(\Omega)
\right).
\end{align*}
So, recalling that $E=E_t$, we have just obtained the
estimate
\[
\mathcal{S}^h(\partial^* E_t\cap \partial^*\Omega)
\leq 
\frac{c'c_o}{\alpha}
\left(
\mu(E_t\cap \Omega)+\|D\one_{E_t}\|(\Omega)
\right).
\]
Integrating this inequality and using Coarea
formula, we conclude that
\[
\|u^*\|_{L^1(\partial\Omega, \mathcal{S}^h)}
\leq
\frac{c'c_o}{\alpha}
\Big(
\|u\|_{L^1(\Omega)}+\|Du\|(\Omega)
\Big).
\]
The general case $u\in BV(\Omega)$ can be done by splitting $u=u^+-u^-$ into its 
positive and negative part.

\end{proof}


It is worth observing that the condition $\delta<\rho<\frac{1}{c_D\cdot c}$ found in the proof of Theorem \ref{mazyathm} tells us that the nature of this result is very local, as it holds at sufficiently small scales only.

\smallskip

We end this discussion by considering the issue of the extendability of a $BV$ function by a constant in terms of its rough trace. For this purpose, we first re-adapt the main arguments of \cite{Ma} and then discuss an alternative result for the zero-extension of a function $u\in BV(\Omega)$ to the whole of $\X$.

\smallskip

\begin{definition}
Let $\Omega\subset\X$ be an open set and let $u\in BV(\Omega)$. We define its $\beta$\textit{-extension} to $\X$, $\beta\in\R$, by setting
\[
u_{\beta}(x)\coloneqq\begin{cases}
u(x), & x\in\Omega\\
\\
\beta, & x\in\Omega^{c}.
\end{cases}
\]
\end{definition}

We then have the following:


\begin{lemma}\label{rough-ext} Assume $\Omega\subset\X$ is an open set such that $\|D\one_{\Omega}\|(\X)<\infty$ and $\mathcal{S}^h\left(\partial\Omega\backslash\partial^{*}\Omega\right)=0$. Let $\beta\in\R$ and $u\in BV(\Omega)$. Then, one has
\begin{equation*}
 \|Du_\beta\|(\X)\le\|Du\|(\Omega)+c_D\|(u-\beta)^*\|_{L^1(\partial\Omega,{\cal S}^h)}.
\end{equation*}
\end{lemma}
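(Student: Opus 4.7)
The plan is to reduce the general estimate to the zero-extension of a \emph{nonnegative} $BV$ function via a Jordan-type splitting, which sidesteps the most delicate essential-boundary bookkeeping. Let $v_{+}$ and $v_{-}$ denote the zero-extensions to $\X$ of $(u-\beta)_{+}$ and $(\beta-u)_{+}$, respectively. A direct pointwise check gives $u_{\beta}-\beta=v_{+}-v_{-}$ on all of $\X$ (both sides vanish on $\Omega^{c}$ and equal $u-\beta$ on $\Omega$), so translation invariance and subadditivity of the total variation yield
\[
\|Du_{\beta}\|(\X)\;=\;\|D(v_{+}-v_{-})\|(\X)\;\le\;\|Dv_{+}\|(\X)+\|Dv_{-}\|(\X).
\]

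Next I would prove the following key auxiliary inequality: for every $w\in BV(\Omega)$ with $w\ge 0$, the zero-extension $w_{0}$ satisfies
\[
\|Dw_{0}\|(\X)\;\le\;\|Dw\|(\Omega)+c_{D}\,\|w^{*}\|_{L^{1}(\partial\Omega,\mathcal{S}^{h})}.
\]
Setting $E_{t}=\{w\ge t\}$, one has $\{w_{0}\ge t\}=E_{t}\subset\Omega$ for $t>0$ and $\{w_{0}\ge t\}=\X$ for $t\le 0$, so the Coarea Formula reduces everything to $\int_{0}^{\infty}\|D\one_{E_{t}}\|(\X)\,\d t$. For a.e.\ $t>0$, $E_{t}$ has finite perimeter in $\Omega$ and, by Remark \ref{per-global}, also in $\X$; since $E_{t}\subset\Omega$, the interior of $\Omega^{c}$ contributes nothing. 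Theorem \ref{locality}, Remark \ref{per-bound} and the hypothesis $\mathcal{S}^{h}(\partial\Omega\setminus\partial^{*}\Omega)=0$ then deliver
\[
\|D\one_{E_{t}}\|(\X)\;\le\;\|D\one_{E_{t}}\|(\Omega)+c_{D}\,\mathcal{S}^{h}(\partial^{*}E_{t}\cap\partial^{*}\Omega).
\]
Integrating in $t$, applying Coarea to the first summand to recover $\|Dw\|(\Omega)$, and using Lemma \ref{Lemma31} to rewrite $\mathcal{S}^{h}(\partial^{*}E_{t}\cap\partial^{*}\Omega)=\mathcal{S}^{h}(\{w^{*}\ge t\})$ for a.e.\ $t$, the layer-cake identity gives the auxiliary bound (note that $w^{*}\ge 0$ on $\partial^{*}\Omega$: for $t\le 0$, $E_{t}=\Omega$, so every $x\in\partial^{*}\Omega$ qualifies in the supremum defining $w^{*}(x)$).

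To conclude I would apply the auxiliary inequality to $w=(u-\beta)_{+}$ and to $w=(\beta-u)_{+}$. The Proposition following Lemma \ref{Lemma31} yields $((u-\beta)_{+})^{*}=(u^{*}-\beta)_{+}$ and $((\beta-u)_{+})^{*}=(u^{*}-\beta)_{-}$, so the two trace $L^{1}$ contributions recombine into $\|u^{*}-\beta\|_{L^{1}(\partial\Omega,\mathcal{S}^{h})}$. A further application of the Coarea Formula gives the Jordan identity $\|D(u-\beta)_{+}\|(\Omega)+\|D(u-\beta)_{-}\|(\Omega)=\|D(u-\beta)\|(\Omega)=\|Du\|(\Omega)$, and collecting these ingredients reproduces the claimed estimate.

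I expect the main obstacle to be technical bookkeeping rather than a deep difficulty: one must verify that the Coarea Formula may be applied to $w_{0}$ (a priori only in $L^{1}_{\mathrm{loc}}(\X)$, but the formula is available in its general measurable-function form and the finiteness of the right-hand side then automatically places $w_{0}$ in $BV(\X)$), and that the rough trace commutes with truncations as asserted by the Proposition. A more direct route --- applying Coarea straight to $u_{\beta}$ and splitting at $t=\beta$ --- would instead force us to bound $\|D\one_{\Omega\setminus E_{t}}\|(\partial\Omega)$ by $c_{D}\mathcal{S}^{h}(\partial^{*}\Omega\setminus\partial^{*}E_{t})$ for $t<\beta$; this would in turn require the inclusion $\partial^{*}(\Omega\setminus E_{t})\cap\partial^{*}\Omega\cap\partial^{*}E_{t}=\emptyset$ modulo $\mathcal{S}^{h}$-null sets, which is subtle in the metric setting. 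The splitting above avoids this issue entirely.
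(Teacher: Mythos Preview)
Your argument is correct and relies on exactly the same tools as the paper (Coarea, Remark~\ref{per-bound}, Lemma~\ref{Lemma31}, and the Proposition giving $(-u)^*=-u^*$), but it is organized differently: you Jordan-decompose $u_\beta-\beta$ first and reduce to a clean auxiliary lemma for nonnegative functions, whereas the paper takes precisely the ``direct route'' you chose to avoid. The paper applies Coarea to $u_\beta-\beta$ on all of $\X$, splits the boundary contribution $\int_\R\|D\one_{\{u_\beta-\beta\ge t\}}\|(\partial\Omega)\,\d t$ at $t=0$, and for $t<0$ passes to complements to bound $\|D\one_{\{u-\beta\le t\}}\|(\partial\Omega)$ by $c_D\,\mathcal{S}^h(\{(u-\beta)^*\le t\})$. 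The subtlety you anticipated does not actually arise: one never needs to compare $\partial^*(\Omega\setminus E_t)\cap\partial^*\Omega$ with $\partial^*\Omega\setminus\partial^*E_t$, because Lemma~\ref{Lemma31} applied to $\beta-u$ (together with the Proposition) yields $\mathcal{S}^h\big(\partial^*\{u-\beta\le t\}\cap\partial^*\Omega\big)=\mathcal{S}^h\big(\{(u-\beta)^*\le t\}\big)$ for a.e.\ $t$ directly. Your splitting buys a self-contained statement for nonnegative data at the price of one subadditivity step and the Jordan identity $\|D(u-\beta)_+\|+\|D(u-\beta)_-\|=\|Du\|$; the paper's route is marginally shorter but no more and no less rigorous.
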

\begin{proof}
By the Coarea Formula, one obviously has
\[
 \|Du\|(\Omega)=\int_{\R} \|D\one_{E_t}\|(\Omega)\text{d}t.
\]
Since any two functions differing by an additive constant have the same total variation, the following holds:
\begin{equation}\label{du-pm-const}
 \|Du\|(\Omega)=\|D(u-\beta)\|(\Omega)=\int_\R\|D\one_{\{u-\beta\ge t\}}\|(\Omega)\d t.
\end{equation}
Therefore, when computing the total variation of $u_\beta$ on $\X$ one is obviously entitled to write
\begin{align}\label{Dubeta}\begin{split}
 \|Du_\beta\|(\X) & =\int_{\R}\|D\one_{\{u_\beta -\beta\ge t\}}\|(\X)\d t\\
 & =\int_\R \|D\one_{\{u_\beta -\beta\ge t\}}\|(\Omega)\d t+\int_\R\|D\one_{\{u_\beta -\beta\ge t\}}\|(\Omega^c)\d t\\
 & =\int_\R\|D\one_{\{u-\beta\ge t\}}\|(\Omega)\d t+\int_\R\|D\one_{\{u_{\beta} -\beta\ge t\}}\|(\partial\Omega)\d t\\&=\|Du\|(\Omega)+\int_\R\|D\one_{\{u-\beta\ge t\}}\|(\partial\Omega)\d t,\end{split}
\end{align}
where we made use of \eqref{du-pm-const} together with the facts that $u_\beta-\beta=u-\beta$ on $\Omega$ and that the total variation of $u_\beta$ at the boundary $\partial\Omega$ takes into account the ``jump'' of $u$ therein (namely, how $u_\beta=u$ and $u_\beta=\beta$ ``join'' at $\partial\Omega$).

Let us then estimate the second term at the rightmost side of \eqref{Dubeta}; by Remark \ref{per-bound} and Lemma \ref{Lemma31} we obtain
\begin{align*}
 \int_\R \|D\one_{\{u-\beta\ge t\}}\|(\partial\Omega)\d t \\
 & = \int_{-\infty}^0\|D\one_{\{u-\beta\ge t\}}\|(\partial\Omega)\d t+\int_0^{+\infty}\|D\one_{\{u-\beta\}}\|(\partial\Omega)\d t \\
 & \le c_D\bigg(\int_{-\infty}^0 \mathcal{S}^h(\{x\in\partial\Omega;\:(u-\beta)^{*}(x)\le t\})\d t \\
 & \mskip+85mu+\int_0^{+\infty}\mathcal{S}^h(\{x\in\partial\Omega;\:(u-\beta)^{*}(x)\ge t\})\d t\bigg) \\
 & =c_D\int_{\partial\Omega}|(u-\beta)^*|\d\mathcal{S}^h=c_D\|(u-\beta)^*\|_{L^1(\partial\Omega,\mathcal{S}^h)}.
\end{align*}

\end{proof}

\smallskip

\begin{remark}
 It is clear that Lemma \ref{rough-ext} gives an upper bound for $\|Du_\beta\|(\X)$, but without any further assumptions \textit{it does not allow us to conclude that} $u_\beta \in BV(\X)$. However, if we reformulate the statement in the hypotheses of Theorem \ref{mazyathm}, then it turns out that the zero-extension of $u\in BV(\Omega)$ to the whole of $\X$, $u_0$, has $BV$ norm $\|u_0\|_{BV(\X)}$ bounded by the $BV$ norm of $u$ in $\Omega$. In other words, $u_0\in BV(\X)$.
\end{remark}


Actually, by assuming the function $u$ to be also essentially bounded, it is possible to get $u_0\in BV(\X)$ under weaker hypotheses:


\begin{proposition}\label{bv-ext} Assume $\Omega\subset\X$ is an open set such that $\|D\one_\Omega\|(\X)<\infty$ and 
$\mathcal{S}^{h}\left(\partial\Omega\backslash\partial^{*}\Omega\right)$ $=0$;
let $E\subset\Omega$ be such that $\|D\one_E\|(\Omega)<\infty$. Then, one has $\|D\one_E\|(\X)<\infty$.

Under the same assumptions, for any $u\in BV\cap L^\infty(\Omega)$
one has $u_0\in BV(\X)$.
\end{proposition}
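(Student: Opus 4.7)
The first claim is essentially already established inside Remark \ref{per-global}: the hypothesis $\|D\one_\Omega\|(\X)<\infty$ together with Theorem \ref{locality} gives $\mathcal{S}^h(\partial^*\Omega)<\infty$, whence the assumption $\mathcal{S}^h(\partial\Omega\setminus\partial^*\Omega)=0$ upgrades this to $\mathcal{S}^h(\partial\Omega)<\infty$, and an invocation of \cite[Proposition 6.3]{kkst1} extends finite perimeter from $\Omega$ to $\X$. The plan for this part is simply to re-record this chain of implications before turning to the novel content.

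For the second statement, my plan is to reduce to the non-negative case and then apply the Coarea Formula to $u_0$. Setting $m\coloneqq \mathrm{ess\,inf}\,u$ and $v\coloneqq u-m$, one has $v\in BV\cap L^\infty(\Omega)$ with $v\ge 0$, and the decomposition $u_0=v_0+m\one_\Omega$ together with $\one_\Omega\in BV(\X)$ reduces the claim to showing $v_0\in BV(\X)$. So I may assume $u\ge 0$, in which case the super-level sets of $u_0$ are transparent: $\{u_0\ge t\}=\{u\ge t\}\subset\Omega$ for $t>0$, while $\{u_0\ge t\}$ essentially equals $\X$ for $t\le 0$ and is essentially empty for $t>\|u\|_\infty$.

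The Coarea Formula then yields
\[
\|Du_0\|(\X)=\int_0^{\|u\|_\infty}\|D\one_{E_t}\|(\X)\,\mathrm{d}t,\qquad E_t=\{u\ge t\}.
\]
For each relevant $t$ the first part of the proposition guarantees $\|D\one_{E_t}\|(\X)<\infty$, and since $\one_{E_t}$ vanishes in a neighborhood of every point of $\X\setminus\overline{\Omega}$ the perimeter decomposes as $\|D\one_{E_t}\|(\X)=\|D\one_{E_t}\|(\Omega)+\|D\one_{E_t}\|(\partial\Omega)$. The boundary piece I would bound \emph{uniformly in $t$} by
\[
\|D\one_{E_t}\|(\partial\Omega)\le c_D\,\mathcal{S}^h(\partial^*E_t\cap\partial\Omega)\le c_D\,\mathcal{S}^h(\partial^*\Omega)<\infty,
\]
via Theorem \ref{locality}, Remark \ref{per-bound}, and the hypothesis $\mathcal{S}^h(\partial\Omega\setminus\partial^*\Omega)=0$. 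Integrating and applying the Coarea Formula once more,
\[
\|Du_0\|(\X)\le \|Du\|(\Omega)+\|u\|_\infty c_D\,\mathcal{S}^h(\partial^*\Omega)<\infty,
\]
which combined with $\|u_0\|_{L^1(\X)}=\|u\|_{L^1(\Omega)}<\infty$ delivers $u_0\in BV(\X)$. The only genuinely delicate point is securing the uniform-in-$t$ boundary bound so that the (now bounded, thanks to the essential boundedness of $u$) interval of integration can be absorbed into a finite constant; everything else reduces to a routine assembly of tools already available in the paper.
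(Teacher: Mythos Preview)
Your proposal is correct and follows essentially the same route as the paper's own proof: reduce to the non-negative case, apply the Coarea Formula to $u_0$, split $\|D\one_{E_t}\|(\X)$ into the interior and boundary contributions, and bound the latter uniformly in $t$ by $c_D\,\mathcal{S}^h(\partial^*\Omega)$ so that the $L^\infty$ bound on $u$ closes the estimate $\|Du_0\|(\X)\le\|Du\|(\Omega)+c_D\,\mathcal{S}^h(\partial^*\Omega)\,\|u\|_\infty$. The only cosmetic difference is that the paper handles the sign reduction via the decomposition $u=u^+-u^-$ rather than your shift by $m=\mathrm{ess\,inf}\,u$.
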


\begin{proof}
The first part of the statement can be actually seen as a particular case of \cite[Proposition 6.3]{kkst1}, so we refer to our previous Remark \ref{per-global} for more comments. 

Let us take $u\in BV\cap L^\infty(\Omega)$ and let us start by first assuming
$u\geq 0$.
Then, since for any $t>0$
\[
\bar{E}_t\coloneqq\{ u_0\geq  t\} =\{x\in \Omega: u(x)\geq t\}=E_t, 
\]
we obtain that
\begin{align*}
\left\Vert Du_0\right\Vert (\X)=&
\int_0^\infty \|D\mathds1_{\{u_0\geq t\}}\| (\X)\text{d}t=
\int_{0}^{+\infty}\left\Vert D{\mathds1}_{{E}_{t}}\right\Vert (\X)\text{d}t\\
\le&
\int_{0}^{+\infty}\left[\left\Vert D\mathds1_{E_{t}}\right\Vert (\Omega)+c_D\,\mathcal{S}^{h}\left(\partial^{*}\Omega\cap\partial^{*}\bar{E}_{t}\right)\right]\text{d}t.
\end{align*}
Since $u\in L^\infty(\Omega)$, we can consider a Borel representative
of $u$ such that for any $t>\|u\|_\infty$, $E_t=\emptyset$; then
we obtain the estimate
\[
\|Du_0\|(\X)\leq
\|Du\|(\Omega)+c_D  \mathcal{S}^h(\partial^* \Omega)  \|u\|_\infty,
\]
whence $u_0\in BV(\X)$. 

The general case $u\in BV\cap L^\infty(\Omega)$ follows by considering the decomposition $u=u^+-u^-$ into its
positive and negative part.

\end{proof}

\smallskip

\begin{remark}\label{aplim-ext-bdd} 
 We recall that in \cite[Lemma 3.2]{kkst2} it was proved that for any function $u\in BV(\X)$ its approximate limits satisfy
 \[
  -\infty<u^\wedge(x)\le u^\vee(x)<\infty
 \]
for $\mathcal{S}^h$-almost every $x\in\X$. 

Consequently, if we assume the hypotheses of Proposition \ref{bv-ext} to be satisfied, or if we re-state Lemma \ref{rough-ext} including the hypotheses of Theorem \ref{mazyathm}, we can conclude that
\[
-\infty<{u_0}^\wedge(x)\le{u_0}^\vee(x)<\infty
 \]
for $\mathcal{S}^h$-almost every $x\in\X$.
\end{remark}

\medskip

\subsection{An Integration by Parts Formula for 
\texorpdfstring{$BV$}{BV} functions}

\medskip

Summarizing the previous results, we can state that
\begin{equation*}
 \|D\one_\Omega\|(\X)<\infty\qquad\mathrm{and}\qquad\mathcal{S}^h\left(\partial\Omega\backslash\partial^*\Omega\right)=0
\end{equation*}
are the underlying conditions for the domain $\Omega$ which, thanks to Theorem \ref{mazyathm}, ensure that the rough trace $u^{*}(x)$
of any function $u\in BV(\Omega)$ is in $L^{1}\left(\partial\Omega,\mathcal{S}^h\right)$. 

This conclusion motivates us, as already done in \cite{bu}, to proceed towards an integration by parts formula for functions of bounded variation by means of a suitable class of \textit{vector fields}.

To this aim, we shall refer to the characterization of $BV$ functions given in \cite{di} by means of \textit{Lipschitz derivations}: na\"ively, linear operators acting on Lipschitz functions and satisfying a Leibniz rule. This class of objects was previously introduced, in more generality, by N. Weaver in his seminal paper \cite{we}. In \cite{di}, Lipschitz derivations served as the ideal tool to define a $BV$ space via an integration by parts formula and to recover a familiar representation formula for the total variation of a $BV$ function on any domain $\Omega\subset\X$. Moreover, as shown in \cite[Theorem 7.3.7]{di}, this version of the $BV$ space turns out to be equivalent to the ``relaxed'' one of \cite{Mir} that we are using in the present work. We also observe that this equivalence is independent of structural assumptions on the ambient space, as \cite{di} operated in the context of a complete, separable metric measure space $(\X,d,\mu)$ endowed with a non-negative Radon measure $\mu$ giving finite mass to bounded sets. Here, however, for our purposes we continue to assume that $(\X,d,\mu)$ is a doubling metric measure space supporting a weak $(1,1)$--Poincar\'e inequality.

We now recall, after \cite{di}, the basic notions and properties regarding Lipschitz derivations. We shall also present the construction of the respective $BV$ space and discuss the essential properties of this characterization, including the equivalence with the definition via relaxation. Observe that, while in \cite{di} Lipschitz derivations are taken to act on Lipschitz functions with bounded support, $\lip_{\rm bs}(\X)$, here we shall define them on the space $\lip_c(\X)$ of compactly supported Lipschitz maps, since in our case the ambient space is proper and then the two classes of functions coincide. The symbol $L^0(\X)$ will be used to denote the space of $\mu$-measurable functions (with no summability requirements), while $\mathbf{M}(\X)$ will indicate the space of finite, signed Radon measures on $\X$.

\bigskip

\begin{definition}\label{lip-der}By a Lipschitz derivation we shall intend any linear map $\dd:\lip_c(\X)\to L^0(\X)$ satisfying the following:
\begin{enumerate}
 \item {\normalfont{\underline{Leibniz rule.}}} For any $f,g\in\lip_c(\X)$, there holds $\dd(fg)=\dd(f)g+f\dd(g)$.
 \item {\normalfont{\underline{Weak locality.}}} There exists some function $g\in L^0(\X)$ such that, for all $f\in\lip_c(\X)$,
 \begin{equation}\label{weak-loc}
  |\dd(f)|\le g\cdot\lip_a(f)\quad\mu\text{-almost\:everywhere}.
 \end{equation}
The smallest function $g$ satisfying \eqref{weak-loc} will be denoted by $|\dd|$.
\end{enumerate}
\end{definition}

In the weak locality condition, $\lip_a(f)$ denotes the \textit{asymptotic Lipschitz constant}, defined as
\begin{equation*}
 \lip_\text{a}(f)(x)\coloneqq\underset{\rho\to 0}{\lim}\,\lip(f,B_\rho(x)),
\end{equation*}
for any ball $B_\rho(x)\subset\X$, where, for any set $E\subset\X$, we define
\begin{equation*}
 \lip(f,E)\coloneqq\underset{x,y\in E;\:x\neq y}{\sup}\frac{|f(x)-f(y)|}{d(x,y)}.
\end{equation*}
 The set of all Lipschitz derivations in $\X$ will be denoted by $\der(\X)$. Moreover, we shall write $\dd\in L^p(\X)$ to intend $|\dd|\in L^p(\X)$.
 
 
 Together with the above notion of derivations, one can define the \textit{divergence} by imposing an integration by parts formula.
 
 \begin{definition}
  \label{def-div}Let $\dd\in L^1_\loc(\X)$. We define the divergence of $\dd$, written $\div(\dd)$, as the linear operator such that
  \begin{equation*}
   \lip_c(\X)\ni f\mapsto-\int_\X\dd(f)\d\mu.
  \end{equation*}
 \end{definition}

 We shall write $\div(\dd)\in L^p(\X)$, $p\in[1,\infty]$, when this operator admits an integral representation via an $L^p$ map: $\div(\dd)=h\in L^p(\X)$ if
 \begin{equation*}
  -\int_\X\dd(f)\d\mu=\int_\X fh\d\mu=\int_\X f\div(\dd)\d\mu.
 \end{equation*}
We observe that $\div(\dd)\in L^p(\X)$ implies its uniqueness. Together with $\der(\X)$, for $p,q\in[1,\infty]$ let us also consider the spaces
\begin{equation*}
 \der^p(\X)\coloneqq\Big\{\dd\in\der(\X):\;\dd\in L^p(\X)\Big\},
\end{equation*}
and
\begin{equation*}
 \der^{p,q}(\X)\coloneqq\Big\{\dd\in\der(\X):\;\dd\in L^p(\X),\:\div(\dd)\in L^q(\X)\Big\}.
\end{equation*}
In particular, we shall concentrate on $\der_b(\X)\coloneqq\der^{\infty,\infty}(\X)$, namely the subspace of \textit{bounded Lipschitz derivations} which will be used in the definition of $BV_\der$ below.


\begin{remark}\label{homog-der}
We observe that by \cite[Lemma 7.1.2]{di}, for any $\dd\in\der(\X)$ and any $u\in L^0(\X)$ there holds $|u\dd|=|u|\cdot|\dd|$. In particular, if $u\in\lip_b(\X)$ and $\dd\in\der^{p,q}(\X)$, $p,q\in[1,\infty]$, then $u\dd\in\der(\X)$ is such that
\begin{equation*}
 \div(u\dd)=u\div(\dd)+\dd(u)\quad\text{and}\quad u\dd\in\der^{p,r}(\X),
\end{equation*}
with $r=\max\{p,q\}$.
\end{remark}


We can now define the space of $BV$ functions by means of Lipschitz derivations.

\begin{definition}
 \label{def-bv-der}Let $u\in L^1(\X)$. We say that $u\in BV_\der(\X)$ if there exists a linear and continuous operator $L_u:\der_b(\X)\to\M(\X)$ such that
 \begin{equation}
  \label{eq-BV} \int_\X\d L_u(\dd)=-\int_\X u\div(\dd)\d\mu\quad\forall\,\dd\in\der_b(\X),
 \end{equation}
such that $L_u(h\dd)=hL_u(\dd)$ for every $h\in\lip_b(\X)$ and every $\dd\in\der_b(\X)$.
\end{definition}


\begin{remark}
 We observe that Definition \ref{def-bv-der} is well posed, in the sense that it does not depend on the particular map $L_u$ realizing \eqref{eq-BV}. To see this, for $u\in BV_\der(\X)$ and $\dd\in\der_b(\X)$, let $L_u$ and $\tilde{L}_u$ be any two maps as in the definition of $BV_\der(\X)$. Then, for any $h\in\lip_b(\X)$, apply Remark \ref{homog-der} to find that $h\dd\in\der_b(\X)$, so by \eqref{eq-BV} and by the Lipschitz-linearity we get
 \begin{equation*}
  \int_\X  h\d L_u(\dd)=\int_\X \d L_u(h\dd)=-\int_\X u\div(h\dd)\d\mu,
 \end{equation*}
and the same holds with $\tilde{L}_u$ in place of $L_u$. In particular,
\begin{equation*}
 \int_\X h\d L_u(\dd) = \int_\X h\d\tilde{L}_u(\dd),
\end{equation*}
so by the arbitrariness of $h\in\lip_b(\X)$ we get $L_u(\dd)=\tilde{L}_u(\dd)$.\\ With a slight abuse of notation, for $u\in BV_\der(\X)$ this common value will be denoted as $\d u(\dd)$.
\end{remark}


\begin{proposition}\label{weak-var}
 {\normalfont{\cite[Theorem 7.3.3]{di}}} Let $u\in BV_\der(\X)$. Then, there exists a finite, non-negative Radon measure $\nu$ on $\X$ such that, for every Borel set $B\subset\X$, one has
 \begin{equation}\label{weak-tot-var}
  \int_B \d u(\dd)\le\int_B|\dd|^* \d\nu,
 \end{equation}
where $|\dd|^*$ denotes the upper semicontinuous envelope of $|\dd|$. The smallest measure satisfying \eqref{weak-tot-var} will be denoted by $\|\d u(\dd)\|$, the weak total variation of $\d u(\dd)$. Moreover,
\begin{equation*}
 \|\d u(\dd)\|(\X)=\sup\Big\{|\d u(\dd)(\X)|:\;\dd\in\der_b(\X),\:|\dd|\le1\Big\}.
\end{equation*}
\end{proposition}


As one may expect, this definition of $BV$ via integration by parts allows for a familiar representation formula for the weak total variation:


\begin{thm}\label{rep-form}
 {\normalfont{\cite[Theorem 7.3.4]{di}}} Let $u\in BV_\der(\X)$. Then, for every open set $\Omega\subset\X$,
 \begin{equation*}
  \|\d u(\dd)\|(\Omega)=\sup\left\{\int_\Omega f\div(\dd)\d\mu:\;\dd\in\der_b(\X),\:|\dd|\le1,\:\mathrm{supp}(\dd)\Subset\Omega\right\}.
 \end{equation*}
 
\end{thm}

\smallskip

The set of bounded Lipschitz derivations $\dd\in\der_b(\X)$ such that $\mathrm{supp}(\dd)\Subset\Omega$ will be denoted by $\der_b(\Omega)$.


It turns out that the definition of $BV_\der$ via Lipschitz derivations produces the same $BV$ space introduced by \cite{Mir}.


\begin{thm}\label{bv-equiv}
 {\normalfont{\cite[Theorem 7.3.7]{di}}} One has the equivalence
 \begin{equation*}
  BV(\X)=BV_\der(\X).
 \end{equation*}
In particular, the respective notions of total variation coincide, so for every $u\in BV(\X)$, there holds
\begin{equation}\label{equal-var}
 \|Du\|(\X)=\|\d u(\dd)\|(\X).
\end{equation}
for all $\dd\in\der_b(\X)$ with $|\dd|\le1$.
\end{thm}

Of course, Theorem \ref{bv-equiv} and in particular \eqref{equal-var} continue to hold true - with the obvious readaptations - if we replace $\X$ with any open set $\Omega\subset\X$.

\smallskip

\begin{remark}\label{prop-du}Let $u\in BV(\Omega)$, $\Omega\subset\X$ open set. We want to discuss some easy properties of the measure $\d u(\dd)$, $\dd\in\der_b(\Omega)$.
\begin{enumerate}
 \item \label{prop-du-1} We first observe that Proposition \ref{weak-var} combined with Theorems \ref{rep-form}-\ref{bv-equiv} immediately yields the absolute continuity of $\d u(\dd)$ with respect to $\|Du\|$.
Therefore, by the Radon-Nikod\'ym Theorem there exists a density $F^{\dd}\in L^1(\|Du\|)$ such that $\d u(\dd)=F^{\dd}\|Du\|$.

Let now $E\subset\Omega$ be such that $\|D\one_E\|(\Omega)<\infty$. Then, an application of Theorem \ref{locality} gives
\begin{align*}
 \d\one_E(\dd)(\Omega^c) & = \int_{\Omega^c} \d\one_E(\dd) = \int_{\Omega^c} F^{\dd}_E\,\d\|D\one_E\| \\ & = \int_{\Omega^c\,\cap\,\partial^*E} F_E^{\dd}\,\theta_E\,\d{\cal S}^h = \int_{\partial\Omega\,\cap\,\partial^*E} F_E^{\dd}\,\theta_E\,\d{\cal S}^h\\
 & =\int_{\partial\Omega}F_E^{\dd}\,\d\|D\one_E\| =\int_{\partial\Omega}\d\one_E(\dd)=\d\one_E(\dd)(\partial\Omega),
\end{align*}
where we wrote $F_E^{\dd}$ to keep into account the fact that, a priori, this density might depend also on the set where we localize.
\item \label{prop-du-2}Let $B\subset\Omega$ be a Borel set. By combining Cavalieri's Principle with Fubini's Theorem, we find
\begin{align*}\begin{split}
 \d u(\dd)(B) = \int_B \d u(\dd) & = - \int_B u\div(\dd)\d\mu \\
 & = - \int_B\left(\int_{-\infty}^{+\infty} \one_{E_t}\d t\right)\div(\dd)\d\mu \\
 & = -\int_{-\infty}^{+\infty}\int_B\one_{E_t}\div(\dd)\d\mu\d t \\
 & = \int_{-\infty}^{+\infty}\d\one_{E_t}(\dd)(B)\d t.\end{split}
\end{align*}
In particular, when $B=\Omega$, the above clearly becomes
\begin{equation*}
 \d u(\dd)(\Omega) = \int_{-\infty}^{+\infty} \d \one_{E_t}(\dd)(\Omega)\d t.
\end{equation*}
\end{enumerate}
\end{remark}

\smallskip
 
\begin{thm}\label{gauss-green} Let $\Omega\subset\X$ be a bounded open set such that $\|D\one_\Omega\|(\X)<\infty$ and ${\cal S}^h(\partial\Omega\backslash\partial^*\Omega)=0$. Then, for every $u\in BV(\Omega)$ and every $\dd\in\der_b(\X)$, one has
\begin{equation}\label{int-part}
 \int_\Omega \d u(\dd)+\int_\Omega u\div(\dd)\d\mu=-\int_{\partial\Omega}\Theta^{\dd}_{E_t}(u^*(x))\d{\cal S}^h(x)
\end{equation}
for some function $\Theta^{\dd}_{E_t}\in L^1(\partial\Omega,{\cal S}^h)$, where, as usual, $E_t\coloneqq\{x\in\Omega;\:u(x)\ge t\}$.
\end{thm}
\begin{proof}
 Let us first start with $u=\one_E$, where $E\subset\Omega$ is a Caccioppoli set in $\Omega$. Then, writing $\Omega=\X\backslash\Omega^c$, one gets, by the property \ref{prop-du-1} in Remark \ref{prop-du} and by noticing that the hypotheses grant that $\one_E\in BV(\X)$ by Remark \ref{per-global},
 \begin{align}\label{d-oneE-Om}\begin{split}
  \int_\Omega \d\one_E(\dd) & = \int_\X \d\one_E(\dd)-\int_{\Omega^c} \d\one_E(\dd) \\
  & = - \int_\X \one_E\div(\dd)\d\mu - \int_{\partial\Omega} \d\one_E(\dd).\end{split}
 \end{align}
Observe that the second integral at the rightmost side in \eqref{d-oneE-Om} is actually on $\partial\Omega\cap\partial^*E$ since, by \ref{prop-du-1} in Remark \ref{prop-du}, $\d\one_E(\dd)\ll\|D\one_E\|$ and the latter is concentrated on $\partial^*E$ by Theorem \ref{locality}.

\smallskip

Now, assume $0\le u\in BV(\Omega)$ for simplicity; the proof for a general function $u\in BV(\Omega)$ will follow by splitting $u$ into its positive and negative parts. \\Since $u\in BV(\Omega)$, we can use both the properties \ref{prop-du-1}--\ref{prop-du-2} in Remark \ref{prop-du} and Theorem \ref{locality} to rewrite \eqref{d-oneE-Om} as
\begin{align*}\begin{split}
 \int_\Omega \d u(\dd) \\ & = \int_0^{+\infty}\d\one_{E_t}(\dd)(\Omega)\d t = \int_0^{+\infty} \d t\int_\Omega \d\one_{E_t}(\dd) \\
 & = -\int_0^{+\infty} \d t\left(\int_\X \one_{E_t}\div(\dd)\d\mu + \int_{\partial\Omega} \d\one_{E_t}(\dd)\right) \\
 & = - \int_0^{+\infty} \d t\left(\int_\Omega \one_{E_t}\div(\dd)\d\mu + \int_{\partial^*\Omega\cap\partial^*E_t} F^{\dd}_{E_t} \theta_{E_t} \d{\cal S}^h\right)\\
 & = -\int_\Omega u\div(\dd)\d\mu-\int_0^{+\infty}\d t\left(\int_{\partial^*\Omega\cap\partial^*E_t} F^{\dd}_{E_t} \theta_{E_t} \d{\cal S}^h\right)\\
 & = -\int_{\Omega} u\div(\dd)\d\mu-\int_0^{+\infty}\d t\left(\int_{\{x\in\partial^*\Omega;\:u^*(x)\ge t\}} F^{\dd}_{E_t} \theta_{E_t}\d{\cal S}^h\right) \\
 & = -\int_\Omega u\div(\dd)\d\mu - \int_{\partial\Omega} \Theta^{\dd}_{E_t}(u^*(x))\d{\cal S}^h,\end{split}
\end{align*}
where we exploited the fact that ${\cal S}^h(\partial^*\Omega\cap\partial^*E_t)={\cal S}^h(\{x\in\partial^*\Omega;\:u^*(x)\ge t\})$ by Lemma \ref{Lemma31} and we defined
\begin{equation*}
 \Theta^{\dd}_{E_t}(u^*(x))\coloneqq\int_0^{u^*(x)} F^{\dd}_{E_t}\theta_{E_t} \d t.
\end{equation*}
Therefore, \eqref{int-part} follows.\\
\end{proof}

\smallskip

\begin{remark}\label{rmk-gauss} Let us add some comments on Theorem \ref{gauss-green}.
 \begin{enumerate}
\item We observe that the property \ref{prop-du-1} in Remark \ref{prop-du} obviously holds for $\d\one_\Omega(\dd)$ as well, so that, besides the localization on $\partial^*E$
\begin{equation*}
 \d\one_E(\dd) = F^{\dd}_E\,\theta_E\,{\cal S}^h\mrestr\partial^*E
\end{equation*}
for $\d\one_E(\dd)$, always by virtue of Theorem \ref{locality} there holds an analogous localization for $\d\one_\Omega(\dd)$: that is,
\begin{equation*}
 \d\one_\Omega(\dd) = F^{\dd}_\Omega\, \theta_\Omega\, {\cal S}^h\mrestr\partial^*\Omega.
\end{equation*}

In the same spirit of Definition \ref{local-space}, we shall say that $({\X},d,\mu)$
is \textit{\textcolor{black}{strongly local }}\textcolor{black}{if,
together with the condition $\theta_{E}=\theta_{\Omega}$ ${\cal S}^{h}$}--almost
everywhere on $\partial^{*}\Omega\cap\partial^{*}E$, one also has
\begin{equation*} F_E^{\dd} = F_\Omega^{\dd}
 \qquad\mathcal{S}^h\mathrm{-almost\;everywhere\;on}\;\partial^*\Omega\cap\partial^*E.
\end{equation*}

\item We can apply Theorem \ref{gauss-green} to the special case where
$\Omega$ is a regular domain in the sense of \cite{bu}: that is, an open set of finite perimeter coinciding with the upper inner Minkowski content of its boundary,
\begin{equation*}
 \|D\one_{\Omega}\|(\X)=\mathfrak{M}^{*}_{\mathrm{in}}(\partial\Omega)\coloneqq\underset{t\to 0}{\lim\sup}\frac{\mu\left(\Omega\backslash\Omega_t\right)}{t}.
 \end{equation*}
Here, by $\Omega_t$  we intend the super-level sets of the distance function: \begin{equation*}\Omega_t\coloneqq\left\{x\in\Omega:\:\mathrm{dist}\left(x,\Omega^c\right)\ge t\right\},\end{equation*}$t>0$. Thus said, if we also require that ${\cal S}^{h}\left(\partial\Omega\backslash\partial^{*}\Omega\right)=0$, we can show that for every $u\in BV(\Omega)$ there exists a \textit{trace operator}
\begin{equation*}\mathrm{T}:BV(\Omega)\rightarrow L^{1}\left(\partial\Omega,\left\Vert D\mathds1_{\Omega}\right\Vert \right)\end{equation*}
such that, for every $\dd\in\der_b(\X)$, one has
\begin{align*}
\int_{\Omega}\d u(\dd)+\int_{\Omega}u\,\div(\dd)\,\mathrm{d}\mu\\ & = - \int_{\partial\Omega}u^{*}(x)\left(\dd\cdot\nu\right)_{\partial\Omega}^{-}\mathrm{d}\left\Vert D\mathds1_{\Omega}\right\Vert\\&\coloneqq\left\langle \mathrm{T}u,\left(\dd\cdot\nu\right)_{\partial\Omega}^{-}\right\rangle.
\end{align*}
Here, the map $(\dd\cdot\nu^-_{\partial\Omega})\in L^1(\partial\Omega,\|D\one_\Omega\|)$ is the \textit{inner normal trace} of $\dd$ on $\partial\Omega$. 
Indeed, in this case we can use the \textit{defining sequence} $\left(\varphi_{\varepsilon}\right)_{\varepsilon>0}\subset\text{Lip}_{c}(\Omega)$
of the regular domain $\Omega$, \cite[Remark 7.1.5]{bu}, and we are entitled to repeat the
proof of \cite[Theorem 7.1.7]{bu}.
As a concrete example of such sets, we observe that for all $x\in\X$ and for almost-every $\rho>0$, any ball $B_\rho(x)$ is a regular domain, \cite{bu,bcm}.
\end{enumerate}
We refer also to \cite[Section 4]{bcm} for refined versions of the results of \cite{bu} in terms of essentially bounded \textit{divergence-measure vector fields} and of an alternative notion of $BV$ functions defined via the differential machinery of \cite{gi}.
\end{remark}

\bigskip

\section{Trace comparison}\label{comparison}

\medskip

In this last section we compare the foregoing discussion on the rough trace with \cite{LS}, where the authors investigate the properties of the trace operator for $BV$ functions by means of the more classical Lebesgue-points characterization.


We start by summarizing the salient definitions and results of \cite{LS} which will be of relevance to us.

\medskip

\begin{definition}\label{trace}Let $\Omega\subset{\X}$
be an open set and let $u$ be a $\mu$-measurable function on $\Omega$.
Then, we shall say that a function $\mathrm{T}u:\partial\Omega\rightarrow\mathbb{R}$
is a \textit{\textcolor{black}{trace }}of $u$ if for ${\cal S}^{h}$-almost
every $x\in\partial\Omega$ one has
\[
\underset{\rho\rightarrow0^{+}}{\lim}\mean{\Omega\cap B_{\rho}(x)}\left|u-\mathrm{T}u(x)\right|\mathrm{d}\mu=0.
\]
\end{definition}


The \textit{zero extension} of a measure $\mu$ from $\Omega$ to $\overline{\Omega}$, written as
$\bar{\mu}$, is given by $\bar{\mu}(A)\coloneqq\mu(A\cap\Omega)$
whenever $A\subset\overline{\Omega}$; in a similar fashion, we shall write $\overline{\mathcal{S}}^h$ to intend the spherical Hausdorff measure on $\partial\Omega$ corresponding to the measure $\bar{\mu}$ on $\overline{\Omega}$.

Accordingly, for any measurable function $u$ in $\Omega$, its zero-extension to $\overline{\Omega}$ will be written as $\bar{u}$; $\bar{u}^\vee$ and $\bar{u}^\wedge$ will therefore denote the approximate limits of $\bar{u}$ computed in terms of the extended measure $\bar{\mu}$.

\medskip

\begin{proposition}\label{du-zero}
{\normalfont{\cite[Proposition 3.3]{LS}}}
Let $\Omega\subset{\X}$
be a bounded open set supporting a $(1,1)$-Poincar\'e inequality and
assume that $\mu$ is doubling on $\Omega$. Let $\overline{\Omega}$
be equipped with the extended measure $\bar{\mu}$. If $u\in BV(\Omega)$,
then its zero-extension $\bar{u}$ to
$\overline{\Omega}$ is such that $\left\Vert \bar{u}\right\Vert _{BV\left(\overline{\Omega}\right)}=\left\Vert u\right\Vert _{BV(\Omega)}$,
whence
\begin{equation*}
 \left\Vert D\bar{u}\right\Vert (\partial\Omega)=0.
\end{equation*}
\end{proposition}

\smallskip

\begin{definition}
We say that an open set $\Omega$
satisfies a \textit{\textcolor{black}{measure-density condition }}\textcolor{black}{if
there exists a constant $C>0$ such that}
\begin{equation}\label{meas-dens}
\mu\left(B_{\rho}(x)\cap\Omega\right)\ge C\mu\left(B_{\rho}(x)\right)
\end{equation}
for ${\cal S}^{h}$-almost every $x\in\partial\Omega$ and for every
$\rho\in(0,\mathrm{diam}(\Omega))$.
\end{definition}

\smallskip

\begin{thm}\label{tracethm}{\normalfont{\cite[Theorem 3.4]{LS}}}
Let $\Omega\subset\X$ be a bounded open set that supports a $(1,1)$-Poincar\'e inequality, and assume that $\mu$ is doubling on $\Omega$. Then, there exist $q>1$ depending only on the doubling constant in $\Omega$ and a linear trace operator
$\mathrm{T}$ on $BV(\Omega)$ such that, given $u\in 
BV(\Omega)$,
for $\overline{{\cal S}}^{h}$-almost every $x\in\partial
\Omega$ we have
\begin{equation}\label{trace-leb}
\underset{\rho\rightarrow0^{+}}{\lim}\mean{\Omega\cap 
B_{\rho}(x)}\left|u-\mathrm{T}u(x)\right|^{\frac{q}{q-1}}
\mathrm{d}\mu=0.
\end{equation}
If $\Omega$ also satisfies the measure-density condition \eqref{meas-dens}, 
the above holds for ${\cal S}^{h}$-almost
every $x\in\partial\Omega$.
\end{thm}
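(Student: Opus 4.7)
The strategy is to transfer the problem to the extended space $(\overline{\Omega},d,\bar{\mu})$, in which the zero-extension $\bar{u}$ of $u$ belongs to $BV(\overline{\Omega},\bar{\mu})$ by Proposition~\ref{du-zero} and satisfies $\|D\bar{u}\|(\partial\Omega)=0$. Since the doubling property of $\mu$ and the weak $(1,1)$-Poincar\'e inequality are inherited by $\bar{\mu}$ on $\overline{\Omega}$, the full $BV$ machinery developed in doubling Poincar\'e metric measure spaces is at our disposal. At $\overline{\mathcal{S}}^{h}$-a.e.\ $x\in\overline{\Omega}$, in particular at $\overline{\mathcal{S}}^{h}$-a.e.\ $x\in\partial\Omega$, the approximate limits $\bar{u}^{\wedge}(x)$ and $\bar{u}^{\vee}(x)$ computed with respect to $\bar{\mu}$ are finite and coincide outside the jump set $S_{\bar{u}}$; I would define $\mathrm{T}u(x)$ as this common value at such points. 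Linearity of $\mathrm{T}$ on $BV(\Omega)$ is then inherited from the linearity of the zero-extension together with that of the ball-average construction.

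To obtain the improved integrability exponent $\tfrac{q}{q-1}$, the key ingredient is the Sobolev--Poincar\'e inequality for $BV$ functions in doubling spaces supporting a weak $(1,1)$-Poincar\'e inequality: there exist $q>1$ depending only on the doubling constant together with constants $C,\lambda>0$ such that for every ball $B_{\rho}(x)$,
\[
\left(\mean{B_\rho(x)}|\bar{u}-\bar{u}_{B_\rho(x)}|^{\frac{q}{q-1}}\,\mathrm{d}\bar{\mu}\right)^{\frac{q-1}{q}}\leq C\rho\,\frac{\|D\bar{u}\|(B_{\lambda\rho}(x))}{\bar{\mu}(B_\rho(x))}.
\]
By the triangle inequality and the fact that $\bar{u}=u$ on $\Omega$, the assertion \eqref{trace-leb} reduces to showing two facts at $\overline{\mathcal{S}}^{h}$-a.e.\ $x\in\partial\Omega$: first, that the averages $\bar{u}_{B_\rho(x)}$ converge to $\mathrm{T}u(x)$, which is a direct consequence of the Lebesgue-point property of $BV$ functions outside the jump set; and, more delicately, that the right-hand side above tends to zero.

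The main obstacle is therefore to prove that
\[
\lim_{\rho\to 0^+}\rho\,\frac{\|D\bar{u}\|(B_{\lambda\rho}(x))}{\bar{\mu}(B_\rho(x))}=0 \qquad \text{for $\overline{\mathcal{S}}^{h}$-a.e.\ } x\in\partial\Omega.
\]
This is where the hypothesis $\|D\bar{u}\|(\partial\Omega)=0$ enters decisively: combining it with a Vitali-type covering argument and the codimension-one lower bound $\bar{\mu}(B_\rho(x))\gtrsim \rho\,h(B_\rho(x))$ built into the definition of $\overline{\mathcal{S}}^{h}$ through the doubling function $h$, one concludes that the set of boundary points at which the above $\limsup$ is strictly positive is $\overline{\mathcal{S}}^{h}$-negligible. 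This differentiation step is the true technical heart of the proof and is the content of the Lahti--Shanmugalingam argument in \cite[Theorem~3.4]{LS}. Finally, under the measure-density condition \eqref{meas-dens}, the inequality $\bar{\mu}(B_\rho(x))=\mu(B_\rho(x)\cap\Omega)\geq C\mu(B_\rho(x))$ forces $\overline{\mathcal{S}}^{h}$ and $\mathcal{S}^{h}$ to be mutually comparable on $\partial\Omega$, so any $\overline{\mathcal{S}}^{h}$-null subset of $\partial\Omega$ is $\mathcal{S}^{h}$-null as well, and the a.e.\ statement automatically upgrades.
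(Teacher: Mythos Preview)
The paper does not actually give its own proof of this theorem: it is quoted verbatim from \cite[Theorem~3.4]{LS} and closed with a $\square$. The only information the paper provides about the argument is in Remark~\ref{eq-aplim}, which records that in \cite{LS} one uses $\|D\bar u\|(\partial\Omega)=0$ from Proposition~\ref{Du-bdry-zero} together with the decomposition of $\|D\bar u\|$ (Remark~\ref{bv-decomp}) to deduce $\overline{\mathcal S}^h(S_{\bar u}\cap\partial\Omega)=0$, hence $\bar u^\wedge=\bar u^\vee$ $\overline{\mathcal S}^h$-a.e.\ on $\partial\Omega$, and then sets $\mathrm T u(x)=\bar u^\wedge(x)$.

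Your sketch is fully consistent with this description and with the actual argument in \cite{LS}: you pass to $(\overline\Omega,d,\bar\mu)$, invoke Proposition~\ref{Du-zero}, define $\mathrm T u=\bar u^\wedge$, and use the Sobolev--Poincar\'e inequality for $BV$ together with a differentiation/covering argument exploiting $\|D\bar u\|(\partial\Omega)=0$ to kill the oscillation term; the upgrade from $\overline{\mathcal S}^h$ to $\mathcal S^h$ via \eqref{meas-dens} is exactly right. So there is no discrepancy to report --- your outline \emph{is} the Lahti--Shanmugalingam proof, which is all the paper invokes.
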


\smallskip

\begin{remark}\label{eq-aplim}
In the proof of \cite[Theorem 3.4]{LS}, the authors used the condition $\|D\bar{u}\|(\partial\Omega)$ $=0$ found in   Proposition \cite[Proposition 3.3]{LS} to infer  that $\overline{\mathcal{S}}^h\!\left(S_{\bar{u}}\cap\partial\Omega\right)$ $=0$; this of course arises from the decomposition of the total variation measure given in Remark \ref{bv-decomp} and entails that the equality
\[
\bar{u}^{\wedge}(x)=\bar{u}^{\vee}(x)
\]
holds for $\overline{{\cal S}}^{h}$-almost every $x\in\partial\Omega$.

Always in the proof of \cite[Theorem 3.4]{LS},  \eqref{trace-leb} was actually found to hold in the form
\[
 \underset{\rho\to 0^+}{\lim}\mean{\Omega\cap B_\rho(x)} |u-\bar{u}^\wedge(x)|^{\frac{q}{q-1}}\text{d}\mu=0.
\]
Then, one sets $\mathrm{T}u(x)=\bar{u}^\wedge(x)$, which in turn equals $\bar{u}^\vee(x)$ for $\overline{\mathcal{S}}^h$-almost every $x\in\partial\Omega$ by the above considerations.
In particular, if $\Omega$ satisfies the measure-density condition \eqref{meas-dens}, these latter equalities are fulfilled for $\mathcal{S}^h$-almost every $x\in\partial\Omega$ as well.
\end{remark}

Next, we prove that the rough trace of a $BV$ function is bounded by the approximate limits of its zero-extension to $\overline{\Omega}$, and that $\text{T}u=u^*$ on $\partial\Omega$.


\begin{thm}
\label{u-star-aplim}
Let $\Omega\subset\X$
be an open set and let $u\in BV(\Omega)$. 
Then, for every $x\in\partial^{*}\Omega$  such that
$u^*(x)>-\infty$, we have  that
\[
\bar{u}^{\wedge}(x)\le u^{*}(x)\le\bar{u}^{\vee}(x).
\]
In particular, if $\Omega$ is a bounded open set supporting a (1,1)-Poincar\'e inequality and $\mu$ is doubling on $\Omega$, then
\[
 \mathrm{T}u(x)=\bar{u}^\wedge(x)=\bar{u}^\vee(x)=u^*(x)
\]
for $\overline{\mathcal{S}}^h$-almost every $x\in\partial\Omega$.

If in addition the measure-density condition \eqref{meas-dens} is satisfied, then the above equality holds $\mathcal{S}^h$-almost everywhere on $\partial\Omega$.

Finally, assume also that $\|D\one_\Omega\|(\X)<\infty$ and $\mathcal{S}^h(\partial\Omega\backslash\partial^*\Omega)=0$. Then, one has
\[
 \|\mathrm{T}u\|_{L^1(\partial\Omega,\mathcal{S}^h)}\le c\|u\|_{BV(\Omega)}
\]
with a constant $c>0$ independent of $u$, if and only if there exists $\delta>0$ such that for every $E\subset\Omega$ with $\text{diam}(E)\le\delta$ and $\|D\one_E\|(\Omega)<\infty$ there holds
\[
 \|D\one_E\|(\Omega^c)\le c'\|D\one_E\|(\Omega)
\]
with a constant $c'>0$ independent of $E$. 
\end{thm}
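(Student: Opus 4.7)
The plan is to carry out the proof in three stages matching the three assertions (pointwise bound, almost-everywhere identification of traces, norm equivalence), leaning on the fact that $\bar\mu$ annihilates $\partial\Omega$ so that $\bar\mu(B_\rho(x)\cap\bar E_t)=\mu(B_\rho(x)\cap E_t)$ for every $t$, and hence the approximate limits $\bar u^\wedge(x),\bar u^\vee(x)$ are governed by the ratios $\mu(B_\rho(x)\cap E_t)/\mu(B_\rho(x)\cap\Omega)$.

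For the pointwise upper bound $u^{*}(x)\le\bar u^{\vee}(x)$, I would fix $t<u^{*}(x)$, pick $s\in(t,u^{*}(x)]$ with $\|D\one_{E_{s}}\|(\X)<\infty$ and $x\in\partial^{*}E_{s}$, and use $E_{s}\subset E_{t}$ together with $\Theta^{*}_\mu(E_{s},x)>0$ to get $\Theta^{*}_\mu(E_{t},x)>0$. Since $\mu(B_\rho(x)\cap E_t)/\mu(B_\rho(x)\cap\Omega)\ge\mu(B_\rho(x)\cap E_t)/\mu(B_\rho(x))$, the limsup of the left ratio is strictly positive, so the $\bar\mu$-density of $\bar E_t$ is not zero and $\bar u^{\vee}(x)>t$; letting $t\uparrow u^{*}(x)$ closes the inequality. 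For the reverse bound $\bar u^{\wedge}(x)\le u^{*}(x)$, I would argue by contradiction: if $\bar u^{\wedge}(x)>u^{*}(x)$, pick $t_{1}\in(u^{*}(x),\bar u^{\wedge}(x))$ so $\mu(B_\rho(x)\cap E_{t_{1}})/\mu(B_\rho(x)\cap\Omega)\to1$, and using Coarea plus the standing hypotheses of Remark \ref{per-global} together with \cite[Proposition 6.3]{kkst1}, choose $t_{2}\in(u^{*}(x),t_{1})$ with $\|D\one_{E_{t_{2}}}\|(\X)<\infty$. Monotonicity $E_{t_{2}}\supset E_{t_{1}}$ preserves the limit $1$, and combined with $x\in\partial^{*}\Omega$ (so $E_{t_{2}}^{c}\supset\Omega^{c}$ has positive upper density), this forces $x\in\partial^{*}E_{t_{2}}$, making $t_{2}$ admissible and yielding the contradiction $t_{2}\le u^{*}(x)$.

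Next I would deduce the identification on $\partial\Omega$. By Remark \ref{eq-aplim}, under the bounded, doubling, $(1,1)$-Poincar\'e hypotheses one has $\text{T}u(x)=\bar u^{\wedge}(x)=\bar u^{\vee}(x)$ for $\overline{\mathcal{S}}^{h}$-almost every $x\in\partial\Omega$. Sandwiching $u^{*}(x)$ between equal values via Step~1 immediately gives $\text{T}u(x)=u^{*}(x)$ on the same full-measure set intersected with $\partial^{*}\Omega$ (the hypothesis $\mathcal{S}^{h}(\partial\Omega\setminus\partial^{*}\Omega)=0$ used later makes the distinction moot). If the measure-density condition \eqref{meas-dens} is further imposed, Theorem \ref{tracethm} and Remark \ref{eq-aplim} promote the same chain of equalities to hold $\mathcal{S}^{h}$-almost everywhere on $\partial\Omega$.

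Finally, Part~D becomes an immediate transfer: assuming $\|D\one_\Omega\|(\X)<\infty$ and $\mathcal{S}^{h}(\partial\Omega\setminus\partial^{*}\Omega)=0$, Theorem \ref{mazyathm} gives the norm estimate $\|u^{*}\|_{L^{1}(\partial\Omega,\mathcal{S}^{h})}\le c\|u\|_{BV(\Omega)}$ if and only if the diameter-$\delta$ perimeter inequality \eqref{per-compl} holds, and the identification $\text{T}u=u^{*}$ established in the previous step (applied on $\partial^{*}\Omega$, where both measures concentrate by hypothesis) converts this into the same equivalence for $\text{T}u$. The main obstacle is the pointwise lower bound $\bar u^{\wedge}\le u^{*}$, where the mismatch between the Coarea formula (which produces finite perimeter only in $\Omega$) and the definition of $u^{*}$ (which demands finite perimeter in all of $\X$) must be bridged; this is exactly where the standing hypotheses of Remark \ref{per-global} and \cite[Proposition 6.3]{kkst1} enter, and is the one step where the argument is not purely local to $x$.
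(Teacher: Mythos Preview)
Your overall architecture matches the paper's: establish the pointwise sandwich $\bar u^\wedge(x)\le u^*(x)\le\bar u^\vee(x)$, then invoke Theorem~\ref{tracethm}/Proposition~\ref{du-zero} (i.e.\ Remark~\ref{eq-aplim}) for the $\overline{\mathcal S}^h$- and $\mathcal S^h$-a.e.\ identification $\mathrm Tu=\bar u^\wedge=\bar u^\vee=u^*$, and finally transfer Theorem~\ref{mazyathm} from $u^*$ to $\mathrm Tu$. Your upper-bound argument is simply the contrapositive of the paper's (they fix $t>\bar u^\vee(x)$ and show $x\in E_t^{(0)}$, hence $x\notin\partial^*E_t$; you fix $t<u^*(x)$ and show the $\bar\mu$-density of $\bar E_t$ is positive), and the two are equivalent.

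The substantive divergence is in the lower bound $\bar u^\wedge(x)\le u^*(x)$. The paper's route is to show that for every $t<\bar u^\wedge(x)$ one has $\Theta^*_\mu(\{u<t\},x)=0$ and hence $x\notin\partial^*E_t$; then the hypothesis $u^*(x)>-\infty$ supplies \emph{one} admissible level $t$ (with $\|D\one_{E_t}\|(\X)<\infty$ and $x\in\partial^*E_t$) which is therefore forced to satisfy $t\ge\bar u^\wedge(x)$, giving $u^*(x)\ge\bar u^\wedge(x)$. The point is that the paper never needs to \emph{produce} a new level with finite perimeter in $\X$: it only uses the one already guaranteed by $u^*(x)>-\infty$. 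Your contradiction argument, by contrast, must manufacture a level $t_2\in(u^*(x),\bar u^\wedge(x))$ with $\|D\one_{E_{t_2}}\|(\X)<\infty$, and for this you invoke the standing hypotheses of Remark~\ref{per-global} together with \cite[Proposition~6.3]{kkst1}. But the first assertion of the theorem is stated for a bare open set $\Omega$ and $u\in BV(\Omega)$, with neither $\|D\one_\Omega\|(\X)<\infty$ nor $\mathcal S^h(\partial\Omega\setminus\partial^*\Omega)=0$ assumed (those enter only in the ``Finally'' clause). So your bridge via Remark~\ref{per-global} is not licensed at that stage; you have correctly located the obstacle, but the paper's strategy is precisely designed to sidestep it by exploiting $u^*(x)>-\infty$ rather than Coarea.
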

\begin{proof}
Recall that, by Definition \ref{rough-def}, $u^{*}(x)$ is the supremum of those $t\in\R$ for which $\|D\one_{E_t}\|(\X)<\infty$ and $x\in\partial^{*}E_{t}$, which explains the requirement $u^*(x)>-\infty$ in our statement. 

We have that
\[
\bar{u}^\vee(x)=\inf \left\{ t\in \R: \lim_{\rho\to 0}
\frac{\bar\mu(\{\bar  u>t\} \cap B_\rho(x))}{\bar\mu(B_\rho(x))}
=0\right\}.
\]
Here, the balls have to be understood as balls on the metric space $\overline{\Omega}$;
then, from the definition of $\bar\mu$ we get
\begin{align*}
\frac{\bar\mu(\{\bar  u>t\} \cap B_\rho(x))}{\bar\mu(B_\rho(x))}
=&
\frac{\bar\mu(\{\bar  u>t\} \cap B_\rho(x)
\cap \overline {\Omega})}{\bar\mu(B_\rho(x)\cap \overline{\Omega})}\\
=&
\frac{\mu(\{u>t\} \cap B_\rho(x))}{\mu(B_\rho(x)\cap \Omega)},
\end{align*}
where we have also taken into account that
$\{u>t\}\subset \Omega$.

If $\bar{u}^\vee(x)=+\infty$ there is nothing to prove;
otherwise, if $t>\bar{u}^\vee(x)$ and 
$x\in \partial^*\Omega$ we obtain that
\begin{align*}
\limsup_{\rho\to 0}
\frac{\mu(\{u> t\}\cap B_\rho(x))}{\mu(B_\rho(x))}
=&
\limsup_{\rho\to 0}
\frac{\mu(\{u> t\}\cap B_\rho(x))}{\mu(B_\rho(x)\cap  \Omega)} \cdot 
\frac{\mu(B_\rho(x)\cap  \Omega)}{\mu(B_\rho(x))}\\
\leq&
\lim_{\rho\to 0}
\frac{\mu(\{u> t\}\cap B_\rho(x))}{\mu(B_\rho(x)\cap  \Omega)}
=0.
\end{align*}
As  a consequence, noticing that for $t>s>\bar{u}^\vee(x)$,
$\{ u>t\} \subseteq\{ u\geq t\}\subseteq \{u>s\}$, we deduce
$x\in E_t^{(0)}$ for any $t>\bar{u}^\vee(x)$  and then
$x\not\in \partial^*E_t$. Hence,
$u^*(x)\leq t$ for  any $t>\bar{u}^\vee(x)$, and so
$u^*(x)\leq \bar{u}^\vee(x)$.

In the same way, if $\bar{u}^\wedge(x)=-\infty$
there is nothing to prove, otherwise
if $t<\bar{u}^\wedge(x)$ and
$x\in \partial^*\Omega$, we get that
\[
\limsup_{\rho\to 0}
\frac{\mu(\{u< t\}\cap B_\rho(x))}{\mu(B_\rho(x))}=0,
\]
and then $\Theta^*_\mu(\{u<t\},x)=0$ and  then
$x\in E_t^{(1)}$,  i.e.  $x\not\in \partial^*E_t$ 
for any $t<\bar{u}^\wedge(x)$. 
Since $u^*(x)>-\infty$, we have that $x\in \partial^*E_t$ for some  $t\in \R$, and the previous computation implies that
$t\geq \bar{u}^\wedge(x)$.  So we can conclude that
$\bar{u}^\wedge(x)\leq u^*(x)$.

The other assertions in the Theorem follow from 
Theorem \ref{mazyathm}, Proposition \ref{du-zero}
and Theorem \ref{tracethm}.

\end{proof}

\smallskip

\begin{remark}\textbf{(Comments and Open Problems)}
In conclusion, our discussion allowed us to find the conditions to impose on a domain $\Omega\subset\X$ in order to ensure that the ``classical'' trace $\text{T}u$ and the rough trace $u^*$ of a $BV$ function coincide $\mathcal{S}^h$-almost everywhere on the boundary of such domain.

Actually, our results also address the $L^1$-summability of the trace $\text{T}u$; indeed, as we can see from Theorem \ref{u-star-aplim}, if we introduce the additional assumption that for some $\delta>0$ and for any set $E\subset\Omega$ with finite perimeter in $\Omega$ and $\text{diam}(E)\le\delta$ it holds
\[
 \|D\one_E\|(\Omega^c)\le c\|D\one_E\|(\Omega)
\]
for some constant $c>0$ independent of $E$, which is namely the fundamental condition \eqref{per-compl} of Theorem \ref{mazyathm}, then we get that $\text{T}u\in L^1(\partial\Omega,\mathcal{S}^h)$ as well.

In \cite[Section 5]{LS}, the authors tackle the issue of the summability of the trace $\text{T}u$ by working again in terms of the measure-density condition \eqref{meas-dens}  and assuming an additional ``surface-density'' condition for $\partial\Omega$, namely that there is a constant $c=c_{\partial\Omega}>0$ such that
\[
 \mathcal{S}^h(B_\rho(x)\cap\partial\Omega)\le c\frac{\mu(B_\rho(x))}{\rho}
\]
 for any $x\in\partial\Omega$ and any $\rho\in (0,2\text{diam}(\Omega))$.
 
 Thus said, one question arises naturally: \textit{how does the requirement \eqref{per-compl} in Theorem \ref{mazyathm} relate with the measure-density condition \eqref{meas-dens} and with the surface-density condition above?}
 
 Answering to such a question would be of general interest as it would provide us with a better understanding of the domains where the ``nice'' properties of traces of $BV$ functions are satisfied, and therefore we would have a more consistent and more comprehensive theory of traces of $BV$ functions.
\end{remark}



\bigskip


\begin{thebibliography}{xx}

\bibitem{A}
L. Ambrosio, \textit{Some fine properties of sets of finite perimeter in Ahlfors regular metric measure spaces},
Adv. Math. 159 (2001), no. 1, 51--67.

\bibitem{A1}
L. Ambrosio, \textit{Fine properties of sets of finite perimeter in doubling metric measure spaces}, Calculus of variations, nonsmooth analysis and related topics. Set-Valued Anal. 10 (2002), no. 2--3, 111--128.

\bibitem{abs}
L. Ambrosio, E. Bru\`e, D. Semola, \textit{Rigidity of the 1-Bakry-\'Emery Inequality and Sets of Finite Perimeter in $\mathsf{RCD}$ Spaces}, Geom. Funct. Anal. 29 (2019), no. 4, 949--1001.

\bibitem{acm} 
L.\ Ambrosio, G.\ Crippa, S.\ Maniglia,
Traces and fine properties of a $BD$ class of vector fields and applications, {\em Ann.\ Fac.\ Sci.\ Toulouse Math.}\ XIV (2005), 527--561.

\bibitem{ad}
L. Ambrosio, S. Di Marino,
\textit{Equivalent definitions of $BV$ space and of total variation on metric measure spaces},
J. Funct. Anal. 266 (2014), no. 7, 4150--4188.

\bibitem{afp}
L. Ambrosio, N. Fusco, D. Pallara,
\textit{Functions of bounded variation and free discontinuity problems},
Oxford Mathematical Monographs. The Clarendon Press, Oxford University Press, New York, 2000. xviii+434 pp.
 
\bibitem{amp}
L. Ambrosio, M. Miranda Jr., D. Pallara,
\textit{Special functions of bounded variation
in doubling metric measure spaces},
Calculus of variations: topics from the mathematical
heritage of E. De Giorgi, 1--45, Quad. Mat., 14, Dept. Math., Seconda Univ. Napoli,
Caserta, 2004.

\bibitem{anz}
G. Anzellotti,
\textit{Pairings between measures and bounded functions and compensated compactness},
Ann. Mat. Pura Appl. (4) 135 (1983), 293--318 (1984). 

\bibitem{bjorn}
A. Bj\"orn, J. Bj\"orn,
\textit{Nonlinear potential theory on metric spaces},
EMS Tracts in
Mathematics, 17. European Mathematical Society (EMS), Z\"urich, 2011.

\bibitem{bps}
E. Bru\`e, E. Pasqualetto, D. Semola, \textit{Rectifiability of the reduced boundary for sets of finite perimeter over $\mathsf{RCD}(K,N)$ spaces}, Preprint, 2019, arXiv:1909.00381.

\bibitem{bu}
V. Buffa,
\textit{$BV$ Functions in Metric Measure Spaces: Traces and Integration by Parts Formul\ae},
Ph.D. Thesis, Universit\`a degli Studi di Ferrara, 2017.

\bibitem{bcm}
V. Buffa, G. E. Comi, M. Miranda Jr.,
\textit{On $BV$ functions and essentially bounded divergence-measure fields in metric spaces}, Submitted paper, 2019, arXiv:1906.07432.

\bibitem{ch}
J. Cheeger,
\textit{Differentiability of Lipschitz functions on metric measure spaces},
Geom. Funct. Anal. 9 (1999), no. 3, 428--517.

\bibitem{cct}
G.-Q. G. Chen, G. E. Comi, M. Torres, 
\textit{Cauchy Fluxes and Gauss–Green Formulas for Divergence-Measure Fields Over General Open Sets}, 
Arch. Rational Mech. Anal. (2019) 233: 1, 87--166.

\bibitem{ctz}
G. Q. Chen, M. Torres, W. P. Ziemer,
\textit{Gauss-Green theorem for weakly differentiable vector fields, sets of finite perimeter, and balance laws},
Comm. Pure Appl. Math. 62 (2009), no. 2, 242--304.

\bibitem{cm}
G. E. Comi, V. Magnani,
\textit{The Gauss-Green theorem in stratified groups},  Adv. Math. 360 (2020).

\bibitem{cp}
G. E. Comi, K. R. Payne,
\textit{On locally essentially bounded divergence measure fields and sets of locally finite perimeter},
 Adv. Calc. Var. 13 (2020), no. 2, 179--217.
 
\bibitem{ct} 
G. E.\ Comi, M.\ Torres, \textit{One-sided approximation of sets of finite perimeter}, Atti\ Accad.\ Naz.\ Lincei Rend.\ Lincei Mat. Appl.\ {\bf 28}, (2017), 181--190.

\bibitem{cdc1}
G. Crasta, V. De Cicco,
\textit{Anzellotti's pairing theory and the Gauss--Green theorem}, 
Adv. Math. (2018).

\bibitem{cdc2}
G. Crasta, V. De Cicco,
\textit{An extension of the pairing theory between divergence-measure fields and BV functions},
 J. Funct. Anal. 276 (2019), no. 8, 2605--2635.

\bibitem{cdcm}
G. Crasta, V. De Cicco, A. Malusa, 
\textit{Pairings between bounded divergence-measure vector fields and BV functions}, 
Preprint (2019), arXiv:1902.06052.

\bibitem{di}
S. Di Marino,
\textit{Recent advances on BV and Sobolev Spaces in metric measure spaces},
Ph.D. Thesis, Scuola Normale Superiore, Pisa, 2014.

\bibitem{gi}
N. Gigli,
\textit{Nonsmooth differential geometry -- an approach tailored for spaces with Ricci curvature bounded from below},
Mem. Amer. Math. Soc. 251 (2018), no. 1196, v+161 pp.

\bibitem{HK}
P. Haj\l asz, P. Koskela,
\textit{Sobolev met Poincar\'e},
Mem. Amer. Math. Soc. 145 (2000), no. 688, x+101 pp.

\bibitem{hkst}
J. Heinonen, P. Koskela, N. Shanmugalingam, J. T. Tyson,
\textit{Sobolev spaces on metric measure spaces. An approach based on upper gradients},
New Mathematical Monographs, 27. Cambridge University Press, Cambridge, 2015. xii+434 pp.

\bibitem{kkst1}
J. K. Kinnunen, R. Korte, N. Shanmugalingam, H. Tuominen,
\textit{A characterization of Newtonian functions with zero boundary values},
Calc. Var. Partial Differential Equations 43 (2012), no. 3--4, 507--528.

\bibitem{kkst2}
J. K. Kinnunen, R. Korte, N. Shanmugalingam, H. Tuominen,
\textit{Pointwise properties of functions of bounded variation in metric spaces},
Rev. Mat. Complut. 27 (2014), no. 1, 41--67.

\bibitem{LS}
P. Lahti, N. Shanmugalingam,
\textit{Trace theorems for functions of bounded variation in metric spaces},
 J. Funct. Anal. 274 (2018), no. 10, 2754--2791.
 
\bibitem{leosar1}
G. P. Leonardi, G. Saracco,
\textit{The prescribed mean curvature equation in weakly regular domains},
NoDEA Nonlinear Diff. Eqs. Appl. 25:9, 29pp., 2018.

\bibitem{leosar2}
G. P. Leonardi, G. Saracco, 
\textit{Rigidity and trace properties of divergence-measure vector fields}, 
Preprint (2019) arXiv:1708.01393.

\bibitem{mms}
N. Marola, M. Miranda Jr., N. Shanmugalingam,
\textit{Boundary measures, generalized Gauss-Green formulas, and mean value property in metric measure spaces},
Rev. Mat. Iberoam. 31 (2015), no. 2, 497--530.

\bibitem{Ma}
V. G. Maz'ya,
\textit{Sobolev spaces with applications to elliptic partial differential equations.
Second, revised and augmented edition},
Grundlehren der Mathematischen Wissenschaften
[Fundamental Principles of Mathematical Sciences], 342. Springer, Heidelberg, 2011. xxviii+866 pp.

\bibitem{Mir}
M. Miranda Jr.,
\textit{Functions of bounded variation on "good" metric spaces},
J. Math.
Pures Appl. (9) 82 (2003), no. 8, 975--1004.

\bibitem{sil}
M. \v{S}ilhav\'y,
\textit{Divergence measure fields and Cauchy's stress theorem},
Rend. Sem. Mat. Univ. Padova 113 (2005), 15--45.

\bibitem{sil2} M.\ \v{S}ilhav\'{y}, 
\textit{Divergence measure vectorfields: their structure and the divergence theorem}, Mathematical modelling of bodies with complicated bulk and boundary behavior, 217--237, Quad. Mat.\ {\bf 20}, Dept. Math., Seconda Univ. Napoli, Caserta, 2007.

\bibitem{we}
N. Weaver, \textit{Lipschitz algebras and derivations. II. Exterior differentiation}, J. Funct. Anal. 178 (2000), no. 1, 64--112.
 
\end{thebibliography}
\end{document}